\def \bes{\begin{eqnarray}}
\def \ees{\end{eqnarray}}
\newtheorem{thm}{Theorem}
\numberwithin{equation}{section}
\title{A randomized Newton's method for solving differential equations based on the neural network discretization}
\author[$1$]{Qipin Chen}
\author[$1$]{Wenrui Hao}
\affil[$1$]{Department of Mathematics, Pennsylvania State University, University Park, PA 16802}
\begin{document}

\maketitle

\begin{abstract}
We develop a randomized Newton's method for solving differential
equations, based on a fully connected neural network discretization.
In particular, the randomized Newton's method randomly chooses  equations from the overdetermined nonlinear system resulting from the neural network discretization and solves the nonlinear system adaptively. We prove theoretically that the randomized Newton's method has a quadratic convergence locally.
We also apply this new method to
various numerical examples, from one- to high-dimensional differential equations, in order to verify  its feasibility and efficiency. Moreover, the randomized Newton's method can allow the neural network to "learn" multiple solutions for nonlinear systems of differential equations, such as pattern formation problems, and provides an alternative way to study the solution structure of nonlinear differential equations overall.
\end{abstract}


\section{Introduction}
Partial differential equations (PDEs) have been widely used in  physics \cite{courant2008methods}, biology \cite{fedosov2011multiscale, lei2012quantifying}, and engineering \cite{ames1965nonlinear},
being used to model everything from bacterial growth \cite{baranyi1993non} to complex fluid structure interactions \cite{paidoussis1998fluid}.  Thus, computing numerical solutions for PDEs has been a research area of long-standing importance in the computational mathematics community. Some efficient numerical methods have already been developed for solving PDEs: for example, the finite difference method \cite{xing2005high,  shu2003high}, the finite element method \cite{xu1999monotone, marion1995error} and the spectral method \cite{shen2011spectral, xiu2010numerical, karniadakis2013spectral, schumack1991spectral}. Moreover, several numerical techniques, such as
the multigrid methods \cite{bramble1991convergence, xu2004energy}, domain decomposition methods  \cite{toselli2006domain, smith2004domain}, and preconditioning techniques  have been used to speed up computations and improve computational efficiency, especially in two- and three-dimensional problems.
However, there are still two challenges facing the numerical PDE community: 1) traditional methods become inefficient for solving high-dimensional PDEs, due to such PDEs' dramatic explosion of grid points. While the sparse grid method \cite{nobile2008sparse} has been used to solve high-dimensional PDEs \cite{shen2010efficient, liu2019krylov} by a constructing multidimensional, multilevel basis, this method still becomes inefficient when the dimension is particularly high, owing to the logarithmic term in the complexity \cite{bungartz2004sparse}; 2) traditional methods are inadequate for computing the multiple solutions that nonlinear PDEs have. While the deflation method \cite{farrell2015deflation} has been used to compute the distinct solutions, it can not be guaranteed to find all the possible solutions, due to the artificial singularities that this  method introduces. And while homotopy methods coupled with domain decomposition \cite{hao2014bootstrapping}, multigrid and spectral methods \cite{wang2018two} have been developed for computing multiple solutions, all of these methods become time-consuming for high-dimensional, nonlinear PDEs.

Recently, machine learning techniques have been developed for solving PDEs, since machine learning has been experiencing great success in various fields related to artificial intelligence (e.g., computer vision \cite{forsyth2002computer}, natural language processing \cite{manning1999foundations}). The application of machine learning techniques to PDE problems, however, is usually not straightforward. Some approaches have included the following: The DGM net \cite{sirignano2018dgm}, based on a fully connected network, has been developed to solve high-dimensional PDEs by minimizing the $L^2$ norm, both in the domain and on the boundary; Using ReLU deep neural networks (DNNs)  has been developed in \cite{he2018relu} to solve differential equations by exploring
the relationship between DNNs with rectified linear unit (ReLU) function and continuous piecewise linear functions, from finite element method;
A deep learning-based approach \cite{weinan2017deep} has been developed to solve high-dimensional parabolic PDEs by reformulating PDEs as backward stochastic differential equations; Machine learning techniques have also been  used to learn governing differential equations by empirical data \cite{qin2019data, wu2019data, wang2019efficient}. All these approaches follow the machine learning optimization framework by minimizing the loss functions constructed according to different approaches. However, these loss functions are usually highly non-convex, and the optimization process is prone to being trapped by some local minima; hence, these techniques may yield some inaccurate solutions, and they hardly converge to the real solution regardless \cite{mascarenhas2004bfgs}.

In this paper, thus, we combine two different approaches: namely, we use the neural network to discretize differential equations while we solve a system of nonlinear equations instead of the optimization problem. More specifically, in order to solve the overdetermined system of nonlinear equations that a fully connected neural network discretization yields, we introduce a randomized Newton's method, which randomly chooses equations from the overdetermined system and solves the nonlinear system adaptively. The remainder of the paper is structured as follows: In Section 2, we show the problem setup,  address the infeasibility of the traditional collocation method, and introduce the overdetermined nonlinear system that results from neural network discretization. Then, in Section 3, we present the detailed algorithm and the convergence analysis of the randomized Newton's method. Section 4 presents several numerical examples, ranging from linear to nonlinear differential equations on both one- and high- dimensional cases, to show the efficiency and feasibility of the randomized Newton's method. Moreover, we demonstrate with an application to pattern formation how this approach can be used to "learn" multiple solutions by coupling the randomized Newton's method with neural network discretizations.

\section{The problem setup}
We consider the following Laplace's equation
	\begin{equation}
	\label{numerical_implement}
	\left\{
		     \begin{array}{lr}
		     - \Delta u = f(u) & \text{in } \Omega \\
		     u = u_0 & \text{on } \partial \Omega, \\
		     \end{array}
	\right.
	\end{equation}
	where $\Omega \subset \mathbb{R}^d$ and $\partial \Omega$ is the boundary of the domain
	$\Omega$.  We have known that any critical point of the following energy functional is the solution of the Laplace's equation (\ref{numerical_implement}) \cite{evans2010partial}
	\begin{equation}\label{opt}
		E(u)= \int_\Omega \frac{1}{2}|\nabla u|^2 - F(u) dx,
	\end{equation}
	where $F'(x) = f(x)$ and $u$ belongs to the admissible set
	\begin{equation}
		\mathcal{A} := \{u\in C^2(\bar{\Omega}) | u = u_0 \text{ on } \partial \Omega\}.
	\end{equation}
	We apply an (n+1)-layer neural network $U(x;\theta)$ to approximate the solution
	to system (\ref{numerical_implement}), $u(x)$, namely,
	\begin{equation}
		\label{abstract_network}
		U(x;\theta) = W_n\sigma(W_{n-1}\cdots\sigma(W_2 \sigma(W_1 x + b_1) + b_2)\cdots + b_{n-1}) + b_n,
	\end{equation}
	where $\{W_i\}_{i=1}^{n}$ and $\{b_i\}_{i=1}^n$ are the weights and
	bias of the network, respectively, and 	$\sigma$ is the activation function such as the sin function, sigmoid function $\frac{e^x}{1+e^x}$, or ReLU $\max(x,0)$ \cite{sirignano2018dgm}.  For simplicity, we denote the set of all parameters as $\theta =
	\{W_1,\cdots,W_n,b_1,\cdots,b_n\}$ and 	the number of all parameters as $|\theta|$.
 Then, optimization techniques are successfully used to solve the resulting minimization problem based on (\ref{opt})
	\begin{equation}
		\min E(\theta) = \int_\Omega \frac{1}{2}|\nabla U(x;\theta) |^2 - F(U(x;\theta) ) dx +
				\int_{\partial\Omega} |U(x;\theta) - u_0(x)|^2 dS,
	\end{equation}\label{opt1}
where the second term represents the boundary conditions, which can be Dirichlet or Neumann \cite{PhysRevD.100.016002}. The numerical challenges of solving the optimization problem (\ref{opt1}) is that 1) the computational cost of function evaluations can be very large for high dimensional cases \cite{sirignano2018dgm}, and 2) the solutions are more likely to be trapped by some local minima, since the objective function $E(\theta)$ is usually highly non-convex \cite{weickert2001theoretical}.
In order to avoid these numerical difficulties, we solve the equation (\ref{numerical_implement}) directly by using the discretization of (\ref{abstract_network}).
Therefore, we get the following system of nonlinear equations:
	\begin{equation}
		\label{overdetermined}
	\mathbf{F}(\theta) =
		\begin{cases}
			\Delta U(x_i;\theta) + f(U(x_i;\theta))=0& \quad i=1,\cdots,N\\
			U(x_j;\theta) - u_0(x_j)=0& \quad j=1,\cdots, M
		\end{cases}
	\end{equation}
	where $\mathbf{F}: \mathbb{R}^{|\theta|} \rightarrow \mathbb{R}^{N+M}$, $x_{i}$ and $x_j$ are sample points on $\Omega$ and $\partial \Omega$ respectively.
The collocation method \cite{russell1972collocation} has been normally used to solve the resulting nonlinear system (\ref{overdetermined}) by taking $N+M=|\theta|$, that is, the number of sample points chosen is the same as the number of variables. However, the collocation method can not be used to solve the nonlinear system arising from neural network discretization due to its highly nonlinearity. We will use a simple example to illustrate this reason by considering
	\begin{equation}
	\label{collocation}
	\left\{
		     \begin{array}{lr}
		     u_{xx} = -4\pi^2\sin(2\pi x) & \text{on } (0,1), \\
		     u(0) = 0 \hbox{~and~} 		     u(1) = 0.  \\
		     \end{array}
	\right.
	\end{equation}
	We apply a one-hidden-layer neural network discretization, namely,
	\begin{equation}
		U(x; \theta) = W_2\sigma(W_1x + b_1) + b_2,
	\end{equation}
	where $\theta=\{W_1,W_2, b_1,b_2\}\in R^4$. Here we choose the activation function $\sigma(x)$ simply as $\sin(x)$, then $\theta=\{2\pi,1,0,0\}$ is the real solution. Since the number of parameters $|\theta|$ is 4, we use the collocation method to sample 2 points, $\{x_1,x_2\}$, in the domain $(0,1)$ and 2 points on
	the boundary. Thus the discretization system becomes:
	\begin{equation}\label{nonlinear}
		\mathbf{F}(\theta) =
		\left(
		\begin{array}{cr}
			\mathbf{F}_1(\theta)\\
			\mathbf{F}_2(\theta) \\
			\mathbf{F}_3(\theta) \\
			\mathbf{F}_4(\theta)
		\end{array}
		\right)
		=
		\left(
		\begin{array}{cr}
			-W_1^2W_2 \sin(W_1x_1 + b_1) + 4\pi^2\sin(2\pi x_1) \\
			-W_1^2W_2 \sin(W_1x_2 + b_1) + 4\pi^2\sin(2\pi x_2) \\
			W_2\sin(b_1) + b_2 \\
			W_2\sin(W_1 + b_1) + b_2
		\end{array}
		\right)
		=0.
	\end{equation}
By solving $(\mathbf{F}_1(\theta),\mathbf{F}_3(\theta))=(0,0)$ for $W_2$ and $b_2$ in term of $W_1$ and $b_1$, we have
	\begin{equation}
		\begin{cases}
			&W_2 = \frac{4\pi^2\sin(2\pi x_1)}{W_1^2\sin(W_1x_1 + b_1)}, \\
			&b_2 = -W_2\sin(b_1) = -\frac{4\pi^2\sin(2\pi x_1)}{W_1^2\sin(W_1x_1 + b_1)} \sin(b_1).
    		\end{cases}
	\end{equation}
Therefore, a simplified system of (\ref{nonlinear}) is written as
	\begin{equation}
		\mathbf{F}(W_1,b_1) =
		\begin{cases}
			\mathbf{F}_2(W_1,b_1) = -\frac{4\pi^2\sin(2\pi x_1)\sin(W_1x_2 + b_1)}{\sin(W_1x_1 + b_1)} + 4\pi^2\sin(2\pi x_2)\\
			\mathbf{F}_4(W_1,b_1) = \frac{4\pi^2\sin(2\pi x_1)\sin(W_1 + b_1)}{W_1^2\sin(W_1x_1 + b_1)} -\frac{4\pi^2\sin(2\pi x_1)}{W_1^2\sin(W_1x_1 + b_1)} \sin(b_1)
		\end{cases}\label{collocation_eq}.
	\end{equation}
We chose three groups of collocation points $\{x_1,x_2\}$:
\[CL_1=\{0.1,0.8\},~CL_2=\{0.8,0.9\},~CL_3=\{0.1,0.2\}.\]
Then we employed Newton's method to solve (\ref{collocation_eq}) with an initial guess $(W_1^0=1,~ b_1^0=1)$.
The solutions of nonlinear systems with three groups of collocation points $CL_i$ (i=1,2,3) are shown in Fig.  \ref{collocation_pic} (upper left): Newton's method finds the real solution for the nonlinear system with collocation points $CL_3$ while it delivers ``fake solutions" for other two systems. The reason is that there might be multiple solutions of (\ref{collocation_eq}) for any given sample points, although all the systems share one solution which corresponds to the real solution $u(x)$. Fig. \ref{collocation_pic} shows the multiple roots for different systems with three groups of collocation points.

	\begin{figure}[h]
		\centering
		\begin{subfigure}{0.45\textwidth} 
			\includegraphics[width=\textwidth]{./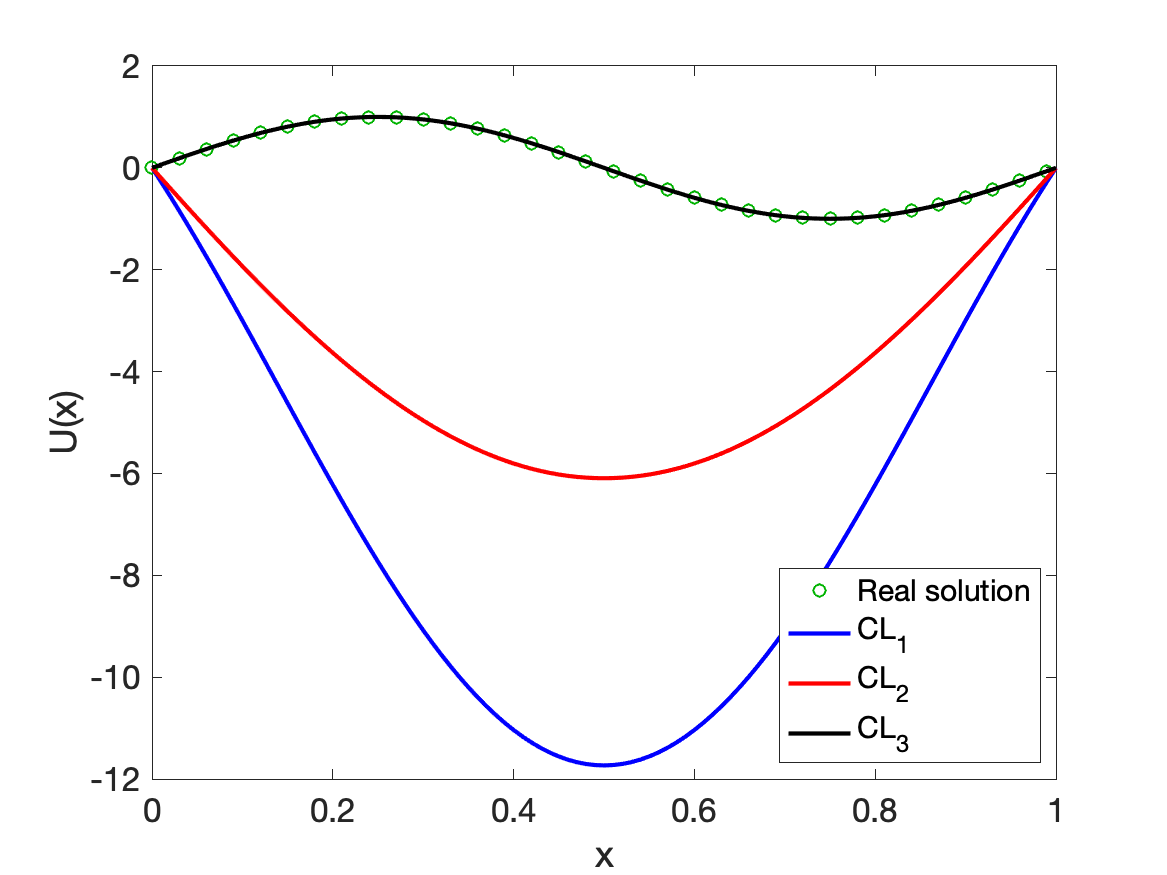}
			\label{collocation_pic}
		\end{subfigure}
		\vspace{1em}
		\begin{subfigure}{0.45\textwidth} 
			\includegraphics[width=\textwidth]{./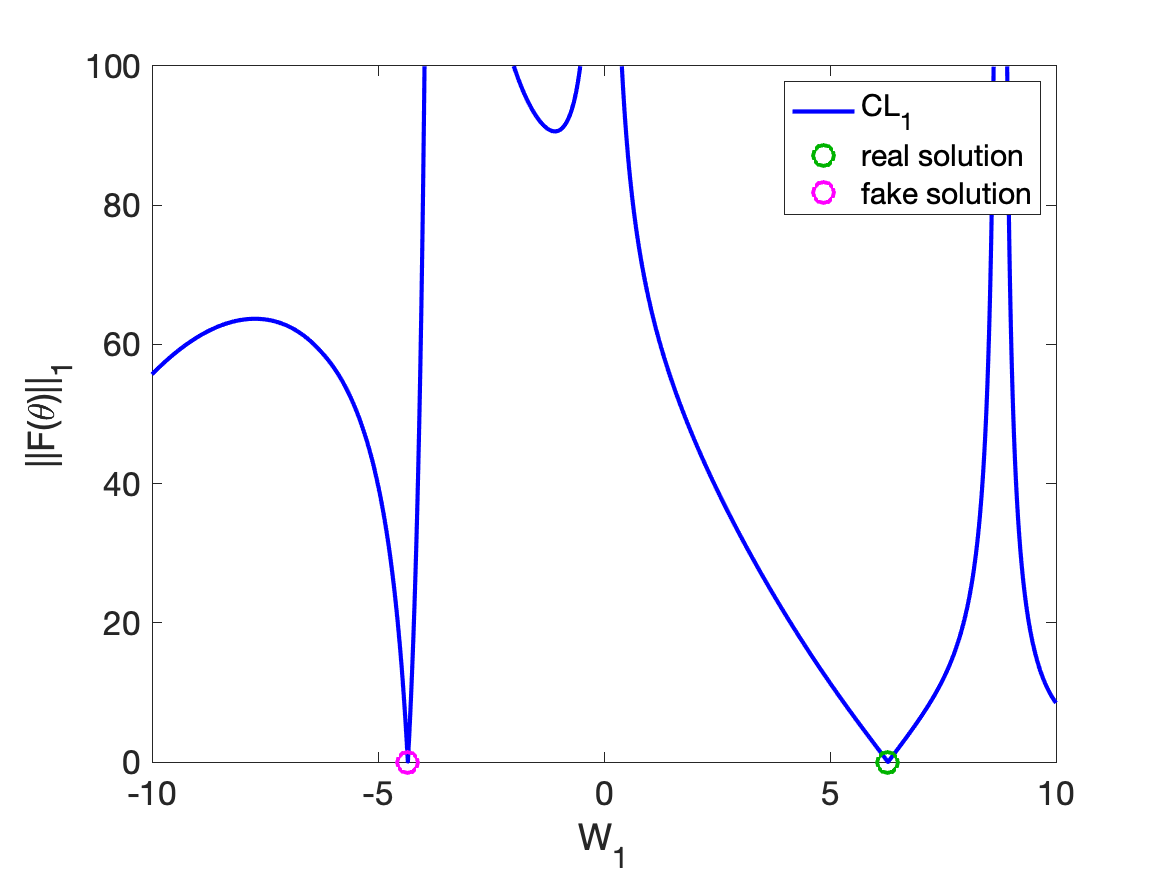}
			\label{CL_1}
		\end{subfigure}
		\vspace{1em} 
		\begin{subfigure}{0.45\textwidth} 
			\includegraphics[width=\textwidth]{./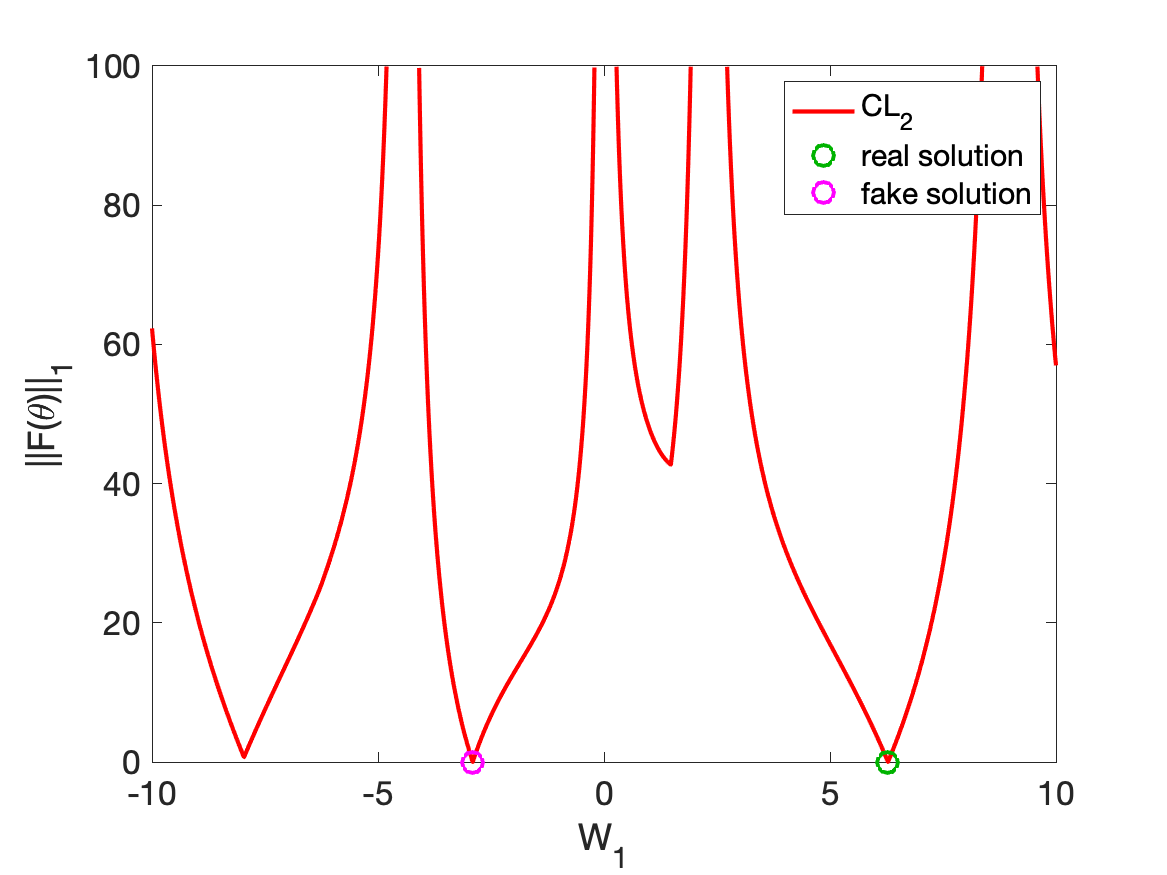}
			\label{CL_2}
		\end{subfigure}
		\vspace{1em}
		\begin{subfigure}{0.45\textwidth} 
			\includegraphics[width=\textwidth]{./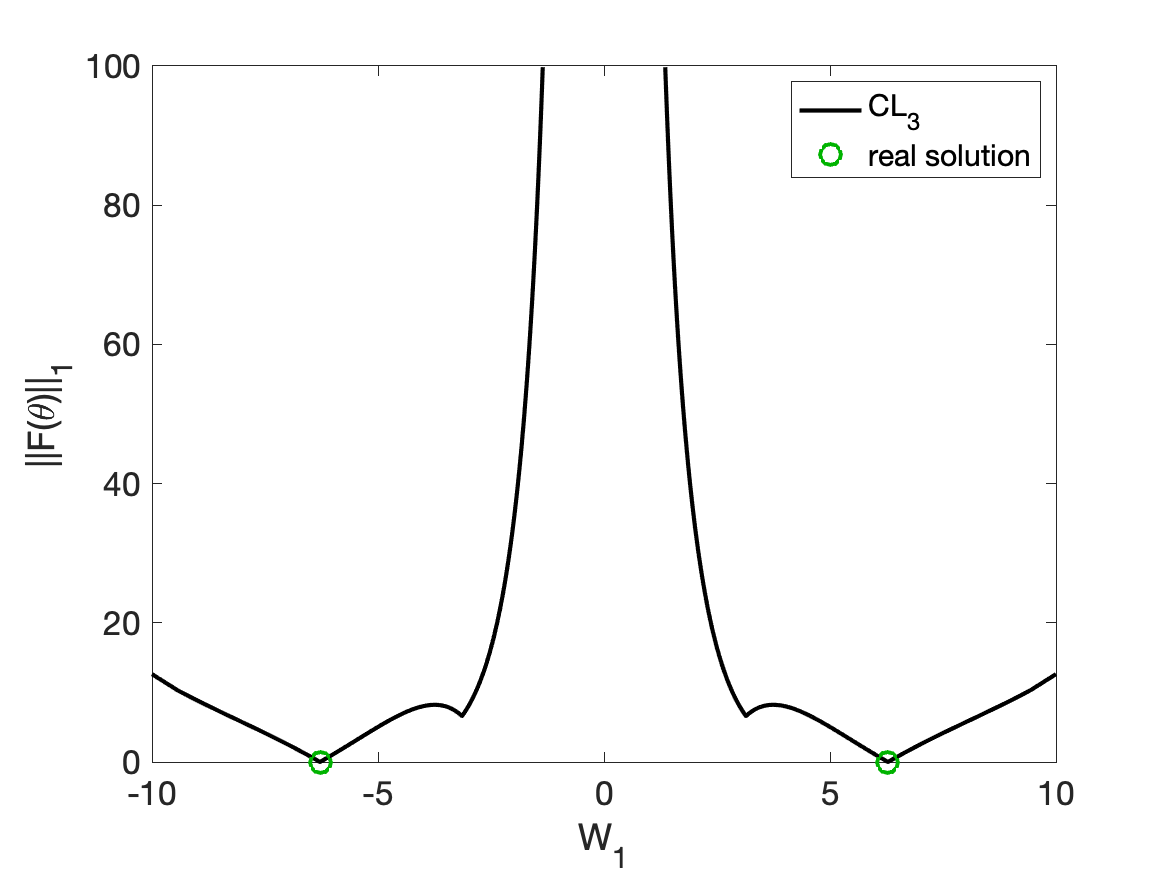}
			\label{CL_3}
		\end{subfigure}

		\caption{{\bf Upper left} Numerical Solutions of the collocation method for three groups of
		sample points $CL_i$ (i=1,2,3); {\bf Others:} $\|\mathbf{F}(\theta)\|_1$ v.s. $W_1$ for three groups of sample points $CL_i$ (i=1,2,3). Obviously, there are multiple roots for each nonlinear system that shares the same root $2\pi$. (We project $b_1$ to $W_1$ by using $b_1 = \frac{\tilde{b}_1}{\tilde{W}_1 - 2\pi}(W_1 - 2\pi)$, where $\tilde{b}_1$ and $\tilde{W}_1$ the solution other than $(W_1=2\pi \hbox{~and~} b_1=0)$.)}
		\label{collocation_pic}
	\end{figure}

Therefore, we need to sample many more points than the number of variables, namely, $N+M>|\theta|$, so that the system (\ref{overdetermined}) does not contain "fake solutions" with probability one \cite{wampler2011numerical, sommese2005introduction, leykin2011numerical}. Thus the system (\ref{overdetermined}) becomes an overdetermined system, since we need many more equations than variables. In this paper,  we developed an efficient randomized Newton's method to solve the overdetermined nonlinear system arising from the neural network discretization of differential equations.

\section{Randomized Newton's method}
We write the overdetermined system of nonlinear equations in the following general form
	\begin{equation}
		\mathbf{F}(\theta)=	\left(
			\begin{aligned}
				f_1&(\theta)\\
				f_2&(\theta)\\
				&\vdots\\
				f_n&(\theta)\\
			\end{aligned}\right)=\mathbf{0}, \label{Problem}
	\end{equation}
	where $\theta\in \mathbb{R}^m$ and $\mathbf{F}: \mathbb{R}^m \rightarrow
	\mathbb{R}^n$ $(n \gg m)$. In numerical algebraic geometry, the overdetermined
	system shown in (\ref{Problem}) can be solved by converting to a square system via solving
	$\mathbf{G}(\theta)=A_{m\times n} \mathbf{F}$, where $A_{m\times n}$ is an
	randomized matrix \cite{della1975numerical}. The drawback of this method is that 1) even the
	evaluation of this augmented system $\mathbf{G}$ could be problematic for
	large-scale systems, and 2) the number of sample points might be adaptive, so that constructing the  random matrix $A_{m\times n}$ each time would be time-consuming.

\subsection{Algorithm}
 In this paper, we develop a randomized Newton's method, namely,
	\begin{equation}
	\label{scheme_1}\theta^{k+1}=\theta^{k}-\nabla \tilde{\mathbf{F}}^{-1}(\theta^{k}) \
	\tilde{\mathbf{F}}(\theta^{k}),
	\end{equation}
	where $\tilde{\mathbf{F}}$ is a system with randomly chosen $m$ equations from
	$\mathbf{F}$, $\nabla \tilde{\mathbf{F}}$ is the Jacobian matrix
	of $\tilde{\mathbf{F}}$ with respect to $\theta$. If
	$\nabla\tilde{\mathbf{F}}(\theta^{k})$ is singular, then the Gauss-Newton's
	method will be employed. The randomized Newton's method can be rewritten with a random variable $\xi$ on a probability space $(\Omega, \mathcal{F}, \mathcal{P})$:
	\begin{equation}
		\xi: \Omega \rightarrow \Gamma,
	\end{equation}
	where $\Gamma$ is a set with all the combinations of $m$ numbers out of $\{1,2,\dots,n\}$. 
	Since $|\Gamma|=  {{n}\choose{m}}$, we denote
	\begin{equation}
		\Gamma = \{\gamma_1, \gamma_2, \dots, \gamma_{{n}\choose{m}}\},
	\end{equation}
	and assume random variable $\xi$ follows the uniform distribution, namely,
	$\mathcal{P}(\xi = \gamma_i)=\frac{1}{{{n}\choose{m}}}$ for $1 \leq i \leq {{n}\choose{m}}$.
	Then we rewrite $\tilde{\mathbf{F}}$ as
	\begin{equation}
	\tilde{\mathbf{F}}=	\mathbf{F}(\theta,\gamma_s):=	
		\left(
			\begin{aligned}
				f_{s_1}&(\theta)\\
				f_{s_2}&(\theta)\\
				&\vdots\\
				f_{s_m}&(\theta)\\
			\end{aligned}
		\right),
	\end{equation}
	where $\gamma_s=\{s_1,\dots,s_m\}$ and $\{s_1, \dots, s_m\}\subset \{1,\dots,n\}$ Similarly, the randomized Newton's method is rewritten as \begin{equation}
		\label{Rand_newton}
		\theta^{k+1} = \theta^k - \nabla \mathbf{F}^{-1}(\theta^k, \xi_k)\cdot \mathbf{F}(\theta^k, \xi_k),
	\end{equation}
which has  a more general form as follows
\begin{equation}
	\label{rnm_sde}
	\theta^{k+1} = \theta^{k} - \eta \nabla\mathbf{F}^\dag (\theta^k)\mathbf{F}(\theta^k) + \sqrt{\eta} \mathcal{R}(\theta^k,\xi_k),
\end{equation}
where $\eta$ is the step-length usually determined by trust region and line search \cite{nocedal1998combining}. In the algorithm (\ref{Rand_newton}), we choose $\eta=1$.
Here $\nabla\mathbf{F}^\dag (\theta^k)$ is the pseudoinverse of $\nabla\mathbf{F}(\theta^k)$ and \[\mathcal{R}(\theta^k,\xi_k) = \sqrt{\eta} (\nabla\mathbf{F}^\dag(\theta^k)\mathbf{F}(\theta^k) - \nabla\mathbf{F}^{-1}(\theta^{k},\xi_k) \mathbf{F}(\theta^{k},\xi_k)).\]
By assuming
\begin{equation}
	\label{expect_assump}
	\nabla\mathbf{F}^\dag(\theta)\mathbf{F}(\theta) = \frac{1}{|\Gamma|} \sum_{i=1}^{|\Gamma|}\nabla\mathbf{F}^{-1}(\theta,i) \mathbf{F}(\theta,i),
\end{equation}
we have
$	\mathbb{E}[\mathcal{R}(\theta,\xi)] = 0.$
Then the covariance matrix of $\mathcal{R}(\theta,\xi)$ is
\begin{equation}
	\label{cov}
	\begin{aligned}
	\Sigma(\theta) &= \mathbb{E}[\mathcal{R}(\theta,\xi)\mathcal{R}^T(\theta,\xi)]\\
	&= \eta\mathbb{E}[(\nabla\mathbf{F}^\dag(\theta)\mathbf{F}(\theta) - \nabla\mathbf{F}^{-1}(\theta,\xi) \mathbf{F}(\theta,\xi))(\nabla\mathbf{F}^\dag(\theta)\mathbf{F}(\theta) - \nabla\mathbf{F}^{-1}(\theta,\xi) \mathbf{F}(\theta,\xi))^T]\\
	&= \eta\Big(\mathbb{E}\big[\nabla\mathbf{F}^{-1}(\theta,\xi) \mathbf{F}(\theta,\xi)(\nabla\mathbf{F}^{-1}(\theta,\xi) \mathbf{F}(\theta,\xi))^T\big] - \nabla\mathbf{F}^\dag(\theta)\mathbf{F}(\theta)(\nabla\mathbf{F}^\dag(\theta)\mathbf{F}(\theta))^T\Big)\\
	&= \frac{\eta}{|\Gamma|}\sum_{i=1}^{|\Gamma|}[\nabla\mathbf{F}^{-1}(\theta,i) \mathbf{F}(\theta,i)(\nabla\mathbf{F}^{-1}(\theta,i) \mathbf{F}(\theta,i))^T] - \eta \nabla\mathbf{F}^\dag(\theta)\mathbf{F}(\theta)(\nabla\mathbf{F}^\dag(\theta)\mathbf{F}(\theta))^T.
	\end{aligned}
\end{equation}
{\bf Remark:} We will look at the randomized Newton's method from the  stochastic differential equation (SDE) point of view and consider the following general form of SDE:
\begin{equation}
	\label{sde}
	d\theta_t = b(\theta_t)dt + \sigma(\theta_t)dW_t, \quad \theta_0 = \theta_{init}.
\end{equation}
Then the Euler-Maruyama discretization \cite{kloeden2013numerical} of (\ref{sde}) becomes
\begin{equation}
	\label{sde_dis}
	\theta^{k+1} = \theta^k + b(\theta^k)\Delta t + \sqrt{\Delta t} \sigma(\theta^k)Z^k
\end{equation}
where $Z^k \sim \mathcal{N}(0,I)$. If we choose $\Delta t= \eta$, $b(\cdot) = - \nabla\mathbf{F}^\dag (\cdot)\mathbf{F}(\cdot)$, and $\sigma(\cdot) = (\Sigma(\cdot))^{\frac{1}{2}}$, then Eq. (\ref{sde_dis}) becomes
\begin{equation}
	\label{sde_dis_approx}
	\theta^{k+1} = \theta^k - \eta  \nabla\mathbf{F}^\dag (\theta^k)\mathbf{F}(\theta^k)+ \sqrt{\eta } (\Sigma(\theta^k))^{\frac{1}{2}} Z^k,
\end{equation}
Thus the SDE (\ref{sde}) is an approximation of (\ref{rnm_sde}) in the weak sense introduced in \cite{li2017stochastic}.

\subsection{Convergence analysis}
Next we define tensor $\nabla^2 \mathbf{F}(\theta, \xi)$ as follows,
	\begin{equation}
		[\nabla^2 \mathbf{F}(\theta,\xi)]_{ijk} := [\nabla^2 f_i(\theta,\xi)]_{jk}, \quad i,j,k \in \{1,2,\cdots,m\}
	\end{equation}
where $\nabla^2 f_i(\theta,\xi)$ is the Hessian matrix of $f_i(\theta,\xi)$. Accordingly, we define the multiplication of the tensor with vectors as, for $\forall a,b\in R^m$,
	\begin{equation}
		[a^T \cdot \nabla^2 \mathbf{F}(\theta,\xi)\cdot b]_i := \sum_{k=1}^m\sum_{j=1}^m a_j [\nabla^2 f_i(\theta,\xi)]_{jk} b_k, \quad i \in \{1,2,\cdots,m\}.
	\end{equation}
	Then $\|\nabla^2 \mathbf{F}(\theta,\xi)\|$ is defined as $\|\nabla^2 \mathbf{F}(\theta,\xi)\| = \max_{i\in\{1,\cdots,m\}}\|\nabla^2 f_i(\theta,\xi)\|$. Thus we summarize the local convergence of the randomized Newton's method in the following theorem.

	\begin{thm}\label{thm1}
		Suppose $\theta^*$ is the solution to $\mathbf{F}(\theta)=0$. Assuming
		that $\nabla A\mathbf{F}(\cdot)$ is invertible and continuous,
		and that $\nabla^2 A\mathbf{F}(\cdot)$  is continuous in a
		small neighborhood of $\theta^*$ for any permutation matrix
		$A\in R^{m\times n}$. Then for scheme (\ref{Rand_newton}), we have \[\mathbb{E}(\|\theta^* -
		\theta^k\|) \le C\Big(\frac{1}{2}\Big)^{2^k-1},\] which implies  the
		quadratic convergence.
	\end{thm}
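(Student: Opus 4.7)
The plan is to prove a pathwise quadratic recursion valid for every realization of the random index sequence $\{\xi_k\}$, and only invoke the expectation at the very end. The starting observation is that $\theta^*$ is a zero of the full system $\mathbf{F}(\theta)=0$, and therefore of every $m$-equation subsystem: $\mathbf{F}(\theta^*,\gamma)=0$ for every $\gamma\in\Gamma$. This lets me run the classical Newton derivation one realization at a time.

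First I would Taylor-expand $\mathbf{F}(\cdot,\xi_k)$ about $\theta^k$, using the tensor second derivative introduced just before the theorem:
\begin{equation*}
0=\mathbf{F}(\theta^*,\xi_k)=\mathbf{F}(\theta^k,\xi_k)+\nabla\mathbf{F}(\theta^k,\xi_k)(\theta^*-\theta^k)+\tfrac{1}{2}(\theta^*-\theta^k)^T\nabla^2\mathbf{F}(\tilde\theta_k,\xi_k)(\theta^*-\theta^k),
\end{equation*}
for some $\tilde\theta_k$ on the segment joining $\theta^k$ and $\theta^*$. Solving for $\mathbf{F}(\theta^k,\xi_k)$, substituting into the update (\ref{Rand_newton}), and cancelling the linear term yields the familiar Newton error identity
\begin{equation*}
\theta^{k+1}-\theta^*=\tfrac{1}{2}\,\nabla\mathbf{F}^{-1}(\theta^k,\xi_k)\bigl[(\theta^k-\theta^*)^T\nabla^2\mathbf{F}(\tilde\theta_k,\xi_k)(\theta^k-\theta^*)\bigr].
\end{equation*}

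Next I would turn the hypothesis into uniform constants. Each selection of $m$ out of $n$ equations corresponds to one of the finitely many selection/permutation matrices $A\in\mathbb{R}^{m\times n}$, so the finiteness of $\Gamma$ (with $|\Gamma|=\binom{n}{m}$) together with the assumed invertibility/continuity of $\nabla(A\mathbf{F})$ and continuity of $\nabla^2(A\mathbf{F})$ produce a radius $r>0$ and constants $M_1,M_2>0$ such that $\|\nabla\mathbf{F}^{-1}(\theta,\gamma)\|\le M_1$ and $\|\nabla^2\mathbf{F}(\theta,\gamma)\|\le M_2$ simultaneously for all $\theta\in B_r(\theta^*)$ and all $\gamma\in\Gamma$. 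Combined with the error identity, this gives the pathwise quadratic bound
\begin{equation*}
\|\theta^{k+1}-\theta^*\|\le C\,\|\theta^k-\theta^*\|^2,\qquad C:=\tfrac{1}{2}M_1M_2,
\end{equation*}
which holds for every realization of $\xi_k$ so long as $\theta^k$ remains in $B_r(\theta^*)$.

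Finally I would iterate. After possibly shrinking the starting neighborhood so that $C\|\theta^0-\theta^*\|\le \tfrac{1}{2}$, a one-line induction on $k$ gives $C\|\theta^k-\theta^*\|\le (1/2)^{2^k}$, equivalently $\|\theta^k-\theta^*\|\le 2C^{-1}(1/2)^{2^k-1}$. Since this bound is sample-pathwise and deterministic in the constants, taking the expectation over $\xi_0,\ldots,\xi_{k-1}$ is immediate and produces the stated estimate $\mathbb{E}(\|\theta^*-\theta^k\|)\le C'(1/2)^{2^k-1}$. The main obstacle is really just bookkeeping: one must verify that $\theta^k$ never leaves $B_r(\theta^*)$ along any realization (so that the Taylor remainder bound and the uniform constants $M_1,M_2$ remain applicable at every step), which follows automatically from the contraction $C\|\theta^k-\theta^*\|\le 1/2$. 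No measure-theoretic subtlety enters, because the randomness only selects among finitely many uniformly well-behaved Newton steps.
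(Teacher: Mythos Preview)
Your proof is correct and follows the same skeleton as the paper's: Taylor-expand $\mathbf{F}(\cdot,\xi_k)$ at $\theta^k$, use $\mathbf{F}(\theta^*,\xi_k)=0$, multiply by $\nabla\mathbf{F}^{-1}(\theta^k,\xi_k)$ to obtain the Newton error identity, and extract uniform constants from the finiteness of $\Gamma$. The only real difference is organizational: the paper takes the expectation immediately after the error identity (conditioning on $\xi_0,\dots,\xi_{k-1}$ and averaging over $\xi_k$) and then iterates through the moment chain $\mathbb{E}\|\theta^*-\theta^{k}\|^2\le \tfrac{M^2}{4}\mathbb{E}\|\theta^*-\theta^{k-1}\|^4\le\cdots$, whereas you keep the quadratic recursion pathwise and only take the expectation at the very end. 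Your route is arguably cleaner, since the paper's moment iteration implicitly relies on the same pathwise bound you isolate explicitly; it also makes transparent the fact that no genuine probabilistic input is needed beyond the finiteness of the subsystem family. Both arrive at the same constant structure $M\|\theta^*-\theta^0\|\le 1$ (your $C\|\theta^0-\theta^*\|\le\tfrac12$ after absorbing the $\tfrac12$) and the same doubly-exponential decay.
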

	
	\begin{proof}
		We consider the  following Taylor expansion of $\mathbf{F}(\theta, \xi_k)$ at $\theta^k$ for $\theta^*$:
		\begin{equation}
			\label{eqn1.7}
			\mathbf{F}(\theta^*, \xi_k) = \mathbf{F}(\theta^k, \xi_k) + \nabla \mathbf{F}(\theta^k, \xi_k)\cdot(\theta^* - \theta^k) + \frac{1}{2}(\theta^* - \theta^k)^T\cdot \nabla^2 \mathbf{F}(\mathbf{t}_k, \xi_k)\cdot (\theta^* - \theta^k),
		\end{equation}
		where $\mathbf{t}_k$ is between $\theta^*$ and $\theta^k$.
		Since $\theta^*$ is a solution to $\mathbf{F}(\theta)=0$,  and is also the solution to $\mathbf{F}(\theta, \xi_k)=0$. Then (\ref{eqn1.7}) becomes
		\begin{equation}
			\mathbf{0}=\mathbf{F}(\theta^*, \xi_k) = \mathbf{F}(\theta^k, \xi_k) + \nabla \mathbf{F}(\theta^k, \xi_k)\cdot(\theta^* - \theta^k) + \frac{1}{2}(\theta^* - \theta^k)^T\cdot \nabla^2 \mathbf{F}(\mathbf{t}_k, \xi_k)\cdot (\theta^* - \theta^k).\label{eqn1.8}
		\end{equation}
		By multiplying $\nabla \mathbf{F}^{-1}(\theta^k, \xi_k)$ on both sides of (\ref{eqn1.8}), we get
		\begin{equation}
			\label{eqn1.9}
			\nabla \mathbf{F}^{-1}(\theta^k, \xi_k)\cdot \mathbf{F}(\theta^k, \xi_k) + (\theta^* - \theta^k) = -\frac{1}{2}\nabla \mathbf{F}^{-1}(\theta^k, \xi_k)\cdot (\theta^* - \theta^k)^T \nabla^2 \mathbf{F}(\mathbf{t}_k, \xi_k) (\theta^* - \theta^k).
		\end{equation}
		By substituting (\ref{Rand_newton}) into the left-hand side of (\ref{eqn1.9}), we have
		\begin{equation}
		\label{eqn1.10}
			\theta^* - \theta^{k+1} =  -\frac{1}{2}\nabla \mathbf{F}^{-1}(\theta^k, \xi_k)\cdot (\theta^* - \theta^k)^T \nabla^2 \mathbf{F}(\mathbf{t}_k, \xi_k) (\theta^* - \theta^k).
		\end{equation}
		By taking the expectation on both sides of (\ref{eqn1.10}), we obtain
		\begin{equation}
			\label{eqn1.11}
			\begin{aligned}
				\mathbb{E}(\|\theta^* - \theta^{k+1}\|) &= \frac{1}{2}\mathbb{E}\big(\|\nabla \mathbf{F}^{-1}(\theta^k, \xi_k)\cdot (\theta^* - \theta^k)^T \nabla^2 \mathbf{F}(t_k, \xi_k) (\theta^* - \theta^k)\|\big) \\
				&= \frac{1}{2}\mathbb{E}_{\xi_0 \xi_1 \dots \xi_{k-1}}\big(\mathbb{E}_{\xi_k}[\|\nabla \mathbf{F}^{-1}(\theta^k, \xi_k)\cdot (\theta^* - \theta^k)^T \nabla^2 \mathbf{F}(t_k, \xi_k) (\theta^* - \theta^k)\|]\big) \\
				&= \frac{1}{2}\mathbb{E}\big(\frac{1}{|\Gamma|}\cdot \sum_{i=1}^{|\Gamma|}\|\nabla  \mathbf{F}^{-1}( \theta^k, \gamma_i)\cdot ( \theta^* -  \theta^k)^T \nabla^2 \mathbf{F}( \mathbf{t}_k, \gamma_i) ( \theta^* -  \theta^k)\|\big) \\
				&\le \frac{1}{2}\frac{1}{|\Gamma|}\cdot \sum_{i=1}^{|\Gamma|}\mathbb{E}\big(\|\nabla  \mathbf{F}^{-1}( \theta^k, \gamma_i)\|\cdot \|\nabla^2  \mathbf{F}( \mathbf{t}_k, \gamma_i)\|\cdot \| \theta^* -  \theta^k\|^2\big) \\
			\end{aligned}
		\end{equation}

		Our assumptions in the theorem are equivalent to, for any
				$i\in \{1,2,\dots,|\Gamma|\}$,
		\begin{enumerate}
			\item $\nabla  \mathbf{F}( \theta^*, \gamma_i)$ is invertible and
				$\nabla  \mathbf{F}(\cdot, \gamma_i)$ is continuous;
			\item $\nabla^2  \mathbf{F}(\cdot, \gamma_i)$ is continuous.
		\end{enumerate}
		Therefore, $\exists M>0$  in a neighborhood of $\theta^*$ such that
			$\|\nabla  \mathbf{F}^{-1}( \theta, \gamma_i)\| \cdot \|\nabla^2
			\mathbf{F}( \mathbf{t}_k, \gamma_i)\|\le M$ for $\forall i\in \{1,2,\dots, |\Gamma|\}$.  Then
			(\ref{eqn1.11}) becomes
		\begin{equation}
			\label{eqn1.12}
			\mathbb{E}\| \theta^* - \theta^{k+1}\| \le
			\frac{M}{2}\mathbb{E}\|\theta^* - \theta^k\|^2.
		\end{equation}
		Similar to (\ref{eqn1.11}), we have
		\begin{equation}
		\mathbb{E}\|\theta^* - \theta^k\|^2	\leq \frac{M^2}{4}\mathbb{E}\|\theta^* - \theta^{k-1}\|^4,
		\end{equation}
which implies 
		\begin{equation}
			\mathbb{E}\| \theta^* - \theta^{k+1}\| \le
			\frac{M^3}{2^3}\mathbb{E}\|\theta^* - \theta^{k-1}\|^4\leq \cdots \leq 			\frac{M^{2^{k+1}-1}}{2^{2^{k+1}-1}}\mathbb{E}\|\theta^* - \theta^{0}\|^{2^{k+1}}.
		\end{equation}
		Assuming that $\|\theta^* - \theta^0\|\le r$ where $r\le \frac{1}{M}$, we have
		\begin{equation}
			\mathbb{E}\| \theta^* - \theta^{k+1}\| \le
			C\Big(\frac{1}{2} \Big)^{2^{k+1}-1},
		\end{equation}
		which implies the quadratic convergence.
	\end{proof}

\section{Numerical Experiments}
In this section, we demonstrate the feasibility of the randomized Newton's method on several examples and choose $\|\tilde{F}(\theta)\| < 5\times10^{-3}$ as the stopping criteria.

\subsection{1D Examples}
\subsubsection{An example with the analytical solution}
	First we show the feasibility of the randomized Newton's method on (\ref{collocation})
	which the traditional collocation method fails. One-hidden-layer neural networks with different numbers
	of nodes,  $U(x;\theta)$, are used to approximate the solution of (\ref{collocation}). By using three different uniform grids on $[0,1]$ with step size $0.1$, $0.02$, and $0.01$ (namely, $n= 11$, $51$ and $101$ respectively in (\ref{Problem})), the randomized Newton's method shows a good agreement with the real solution. For instance, Figure \ref{fig1} shows the numerical solution with 10 hidden nodes and $101$ sample points versus the real solution. More specifically, we list numerical errors between numerical solutions and the real solution with different numbers of nodes and sample points in Table \ref{table_1}. Here the numerical error $Err(U(x;\theta)-u(x))$ is defined as:
	 \begin{equation}
	 	Err(U(x;\theta)-u(x)) = \sqrt{\int_0^1|U(x;\theta) - u(x)|^2 dx}.
	 \end{equation}
From Table \ref{table_1}, we note that
1) a better approximation is achieved by  increasing nodes due to the universal approximation theory \cite{cybenko1989approximation}, and
2) the redundancy happens even in the one-hidden-layer neural network \cite{Medler1994, medler1994using}; therefore the randomized Newton's method has more
 iterations than expected.

	\begin{figure}[h]
		\centering
		\includegraphics[width=0.45\textwidth]{./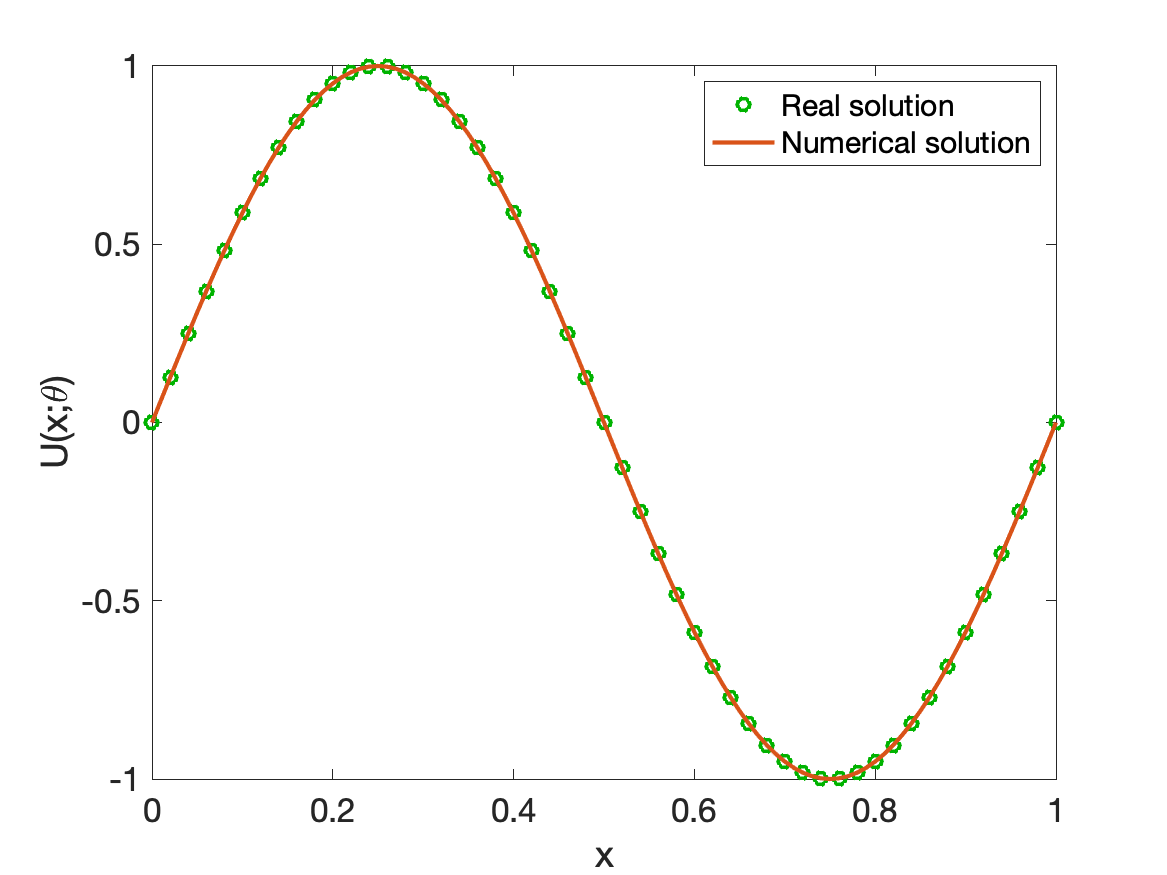}\label{example1}
		\caption{The numerical solution with 10 nodes and $101$ sample points V.S. the real solution
		to system (\ref{collocation}).}
		\label{fig1}
	\end{figure}

\begin{table}
\centering
\begin{tabular}{|c|c|c|c|c|}
\hline
$\#$ of sample points&\# of nodes&$\#$ of variables&$\#$ of iterations& Errors\\
\hline
\multirow{3}*{11}&1&4&18&4.1e-4\\
\cline{2-5}
~&2&7&24&1.2e-4\\
\cline{2-5}
~&3&10&33&1.5e-4\\
\hline
\multirow{3}*{51}&1&4&28&2.3e-4\\
\cline{2-5}
~&5&16&33&4.0e-4\\
\cline{2-5}
~&10&31&37&5.6e-5\\
\hline
\multirow{3}*{101}&1&4&22&2.4e-4\\
\cline{2-5}
~&5&16&64&8.0e-5\\
\cline{2-5}
~&10&31&50&6.0e-6\\
\hline
\end{tabular}
\caption{Numerical errors for different number of nodes and sample points.}
\label{table_1}
\end{table}

\subsubsection{An example with multiple solutions}
Secondly, we consider the following differential equation with multiple solutions:
	\begin{equation}
	\label{example_2}
	\left\{
		     \begin{array}{lr}
		     u_{xx} = f(u) \quad \text{ on } (0,1), \\
		     u'(0) = 0 \text{ and } u(1) = 0,
		     \end{array}
	\right.
	\end{equation}
	where
	\begin{equation}
		\label{example_2_f}
		f(u) = - \lambda (1+u^p), \quad p \in \mathbb{N} \text{ and } \lambda
		\ge 0.
	\end{equation}
	
When $p = 4$, we have known that there are two solutions if $\lambda< \lambda^*$ ($\lambda^* \approx 1.30107$), and one solution if $\lambda=\lambda^*$ \cite{hao2014bootstrapping,hu2011blow}.	The real solutions of (\ref{example_2}) can be determined by solving the nonlinear equation below for $u(0)$:
\begin{equation}
	\label{example_2_3}
G(u_0):=\int_0^{u_0}\frac{ds}{\sqrt{F(u_0) - F(s)}} - \sqrt{2} = 0,
\end{equation}
where $F(u) = \int_0^u\lambda(1+s^p)ds$ and $u_0 = u(0)$. Then we test the randomized Newton's method for both $\lambda=1.2$ and $\lambda=\lambda^*$ by employing
a one-hidden-layer neural network with two nodes and 101 uniform sample points on $[0,1]$. More specifically, the numerical solution is written as
	\begin{equation}
		U(x; \theta) = W_2\sigma(W_1x + b_1) + b_2,
	\end{equation}
	where $W_1,b_1\in \mathbb{R}^{2\times 1}$, $W_2\in \mathbb{R}^{1\times 2}$
	, $b_2\in \mathbb{R}$ and $\theta = \{W_1,b_1,W_2,b_2\}$. For $\lambda=1.2$, we
	choose two different initial values \[\theta_0^1 = \{(1,1)^T,(1,1)^T,(1,1),1\} \hbox{ and }
 \theta_0^2 = \{(5, 0.5)^T, (1, -3)^T, (1, -27), 2\}\] and obtain two numerical solutions, as shown in
Figure \ref{eg2_fig1}; For $\lambda = \lambda^*$, the two initial values yield the same numerical solution, as shown in
Figure \ref{eg2_fig2}. Moreover, we also test the algorithm on neural networks with different structures and show
numerical errors in Table \ref{table_2}.
	
\begin{table}
\centering
\begin{tabular}{|c|c|c|c|}
\hline
$\#$ of sample points&Width of hidden layers&$\#$ of variables& Errors\\
\hline
\multirow{4}*{101}&2&7&7.5e-2\\
\cline{2-4}
~&5&16&4.8e-2\\
\cline{2-4}
~&(2,2)&13&3.3e-3\\
\cline{2-4}
~&(3,2)&17&2.4e-3\\
\hline
\end{tabular}
\caption{Numerical errors for neural networks with different structures (the first two rows are neural networks with a 1-hidden layer while the last two rows are neural networks with 2-hidden layers).}
\label{table_2}
\end{table}

	\begin{figure}
		\centering
		\begin{subfigure}{0.45\textwidth} 
			\includegraphics[width=\textwidth]{./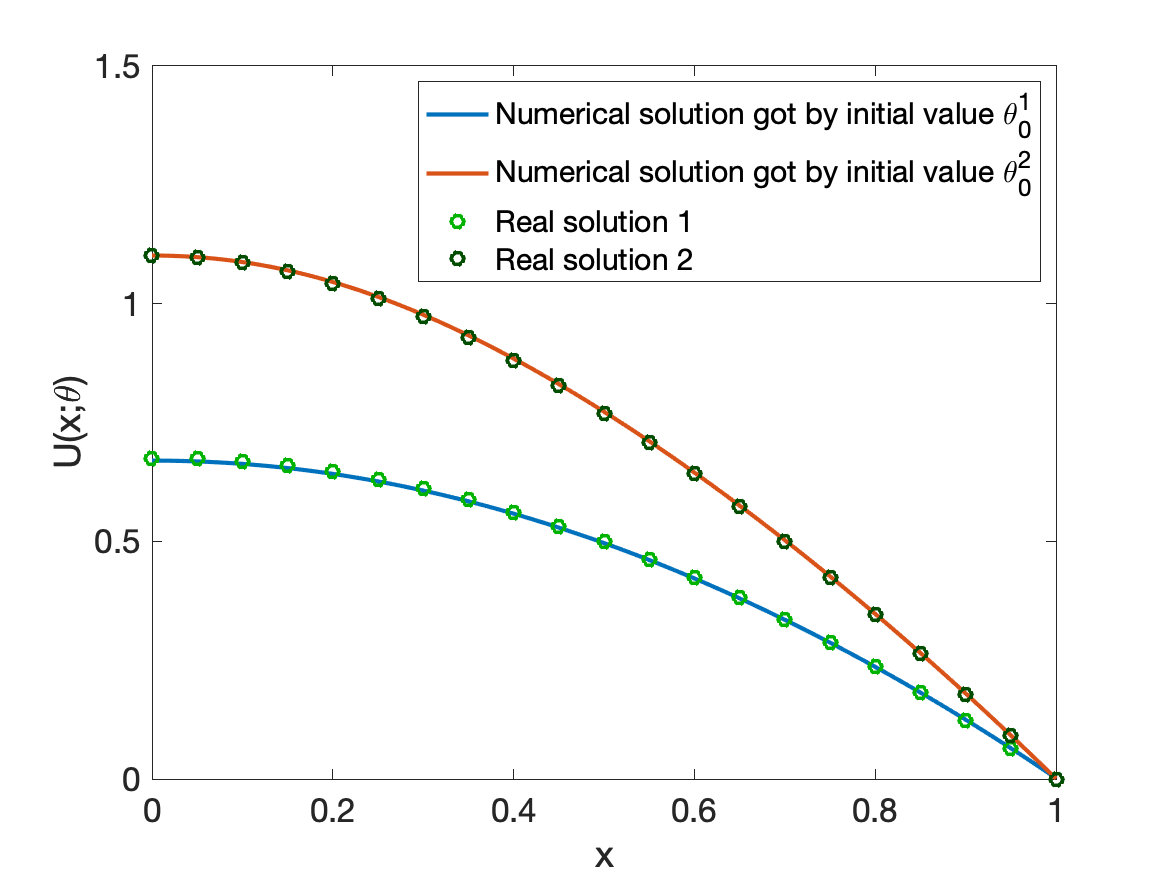}
			\caption{$\lambda=1.2$} 
			\label{eg2_fig1}
		\end{subfigure}
		\vspace{1em} 
		\begin{subfigure}{0.45\textwidth} 
			\includegraphics[width=\textwidth]{./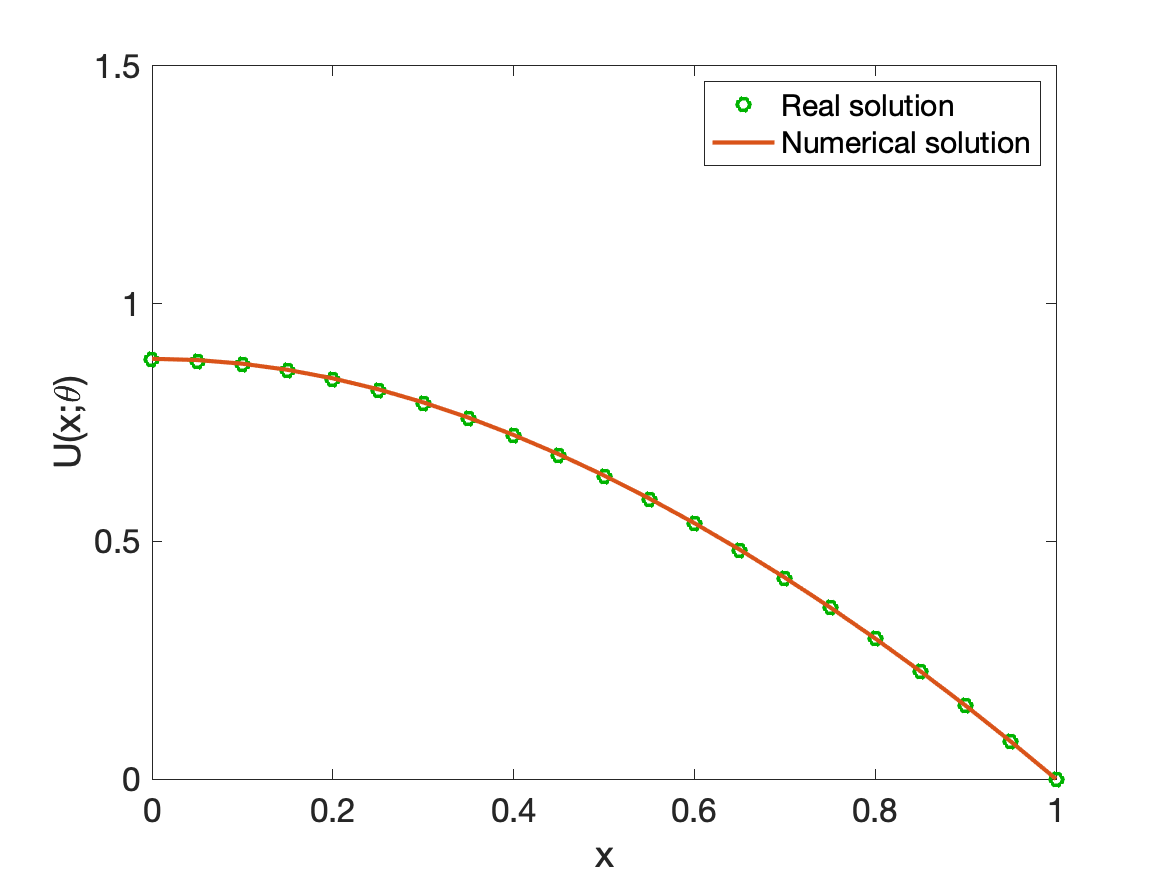}
			\caption{$\lambda=\lambda^*$} 
			\label{eg2_fig2}
		\end{subfigure}
		\caption{Numerical solutions V.S. real solutions of system (\ref{example_2}).} 
	\end{figure}

\subsubsection{The 1D Burger's equation}
Next we consider the 1D Burger's equation with a viscosity term:
	\begin{equation}
	\label{example_3}
	\left\{
		     \begin{array}{lr}
		     -\epsilon u_{xx} + (\frac{u^2}{2})_x = \sin(x)\cos(x) & \text{on } (0,\pi), \\
		     u(0) = 0 \text{ and } u(\pi) = 0, \\
		     \end{array}
	\right.
	\end{equation}
	where $\epsilon$ is the viscosity coefficient.
We use  a one-hidden-layer neural network  with ten nodes to approximate the solution of (\ref{example_3}) and  $101$ uniform sample points  on $[0,1]$.
When $\epsilon=1$, the solution is unique and converges to the entropy solution of the 1D Burger's equation as $\epsilon\rightarrow 0$ \cite{de2004minimal}. The analytical
solution of system (\ref{example_3}) when $\epsilon=0$ has the following form \cite{hao2013homotopy,chou2006high,chou2007high}
\begin{equation}
\label{entropy_solu}
	u(x) = \begin{cases}
		\sin(x) \quad &0\le x < x_0,\\
		-\sin(x) \quad &x_0 < x \le \pi,
	\end{cases}
\end{equation}
where $x_0\in [0,\pi]$ is the shock location. Specifically when $x_0 = \frac{\pi}{2}$, it becomes the entropy solution due to the symmetry.
Therefore, we use the randomized Newton's method to solve (\ref{example_3}) as tracking $\epsilon$ from $1$ to $0$.  In order to test the randomized Newton's method, we employ two tracking methods: 1) homotopy tracking \cite{hao2013homotopy}, namely,  using the previous solution as the initial guess; 2) the randomized initial guess for each $\epsilon$.
We list  numerical performance of two tracking methods based on the randomized Newton's method in Table \ref{table_3},
which shows numbers of iterations and condition numbers for each $\epsilon$. Obviously, homotopy tracking converges much faster and also captures the singularity at $\epsilon=0$ (see more details in \cite{hao2013homotopy}).  The numerical solutions are plotted in Figure \ref{eg4_fig1} and \ref{eg4_fig2}, which shows that the homotopy tracking obtains the entropy solution while the other method converges to an artificial steady state of $x_0=1$ when $\epsilon\rightarrow 0$. The numerical errors at $\epsilon=0$ are $3.6\times 10^{-3}$ for homomtopy tracking and $4.1\times 10^{-3}$ for the tracking with a random initial guess. This example demonstrates that the randomized Newton's method can be coupled with different tracking methods and computes different solutions.

	\begin{figure}
		\centering
		\begin{subfigure}{0.45\textwidth} 
			\includegraphics[width=\textwidth]{./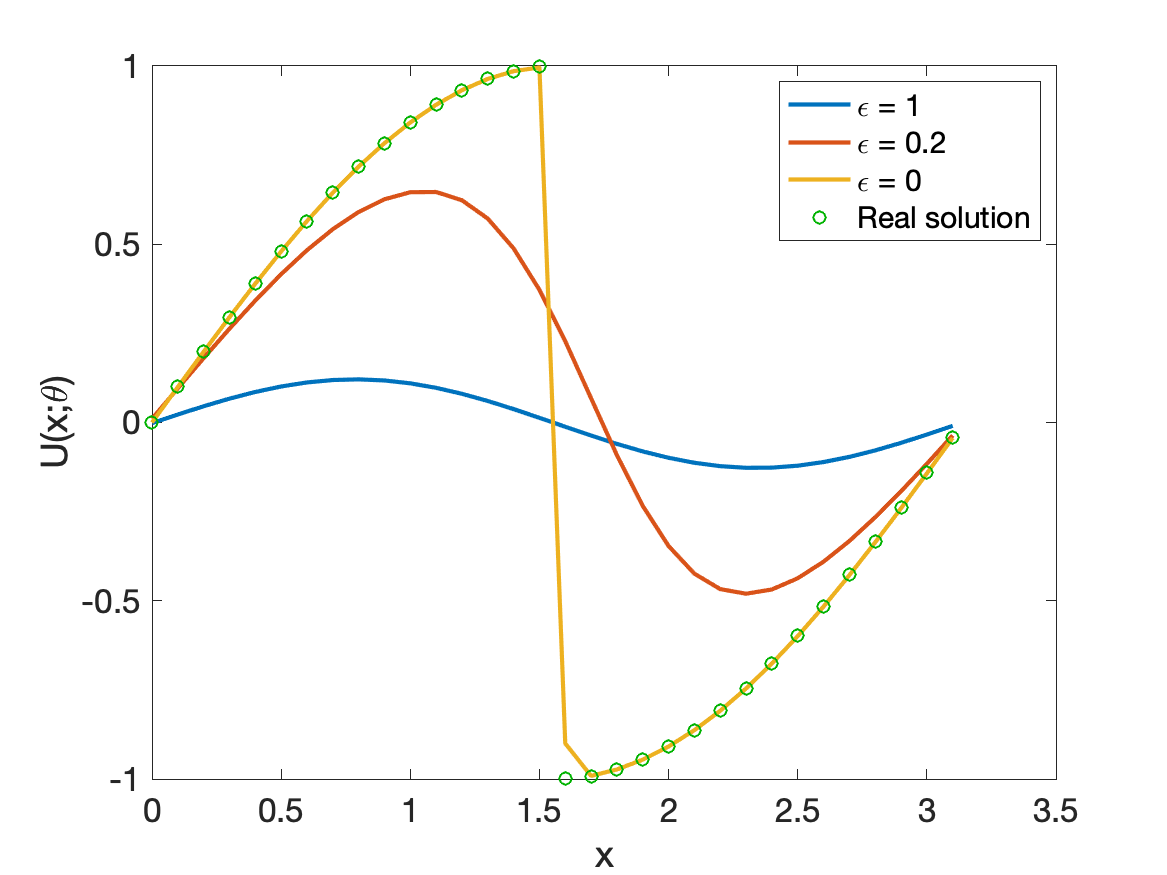}
			\caption{Homotopy tracking} 
			\label{eg4_fig1}
		\end{subfigure}
		\vspace{1em} 
		\begin{subfigure}{0.45\textwidth} 
			\includegraphics[width=\textwidth]{./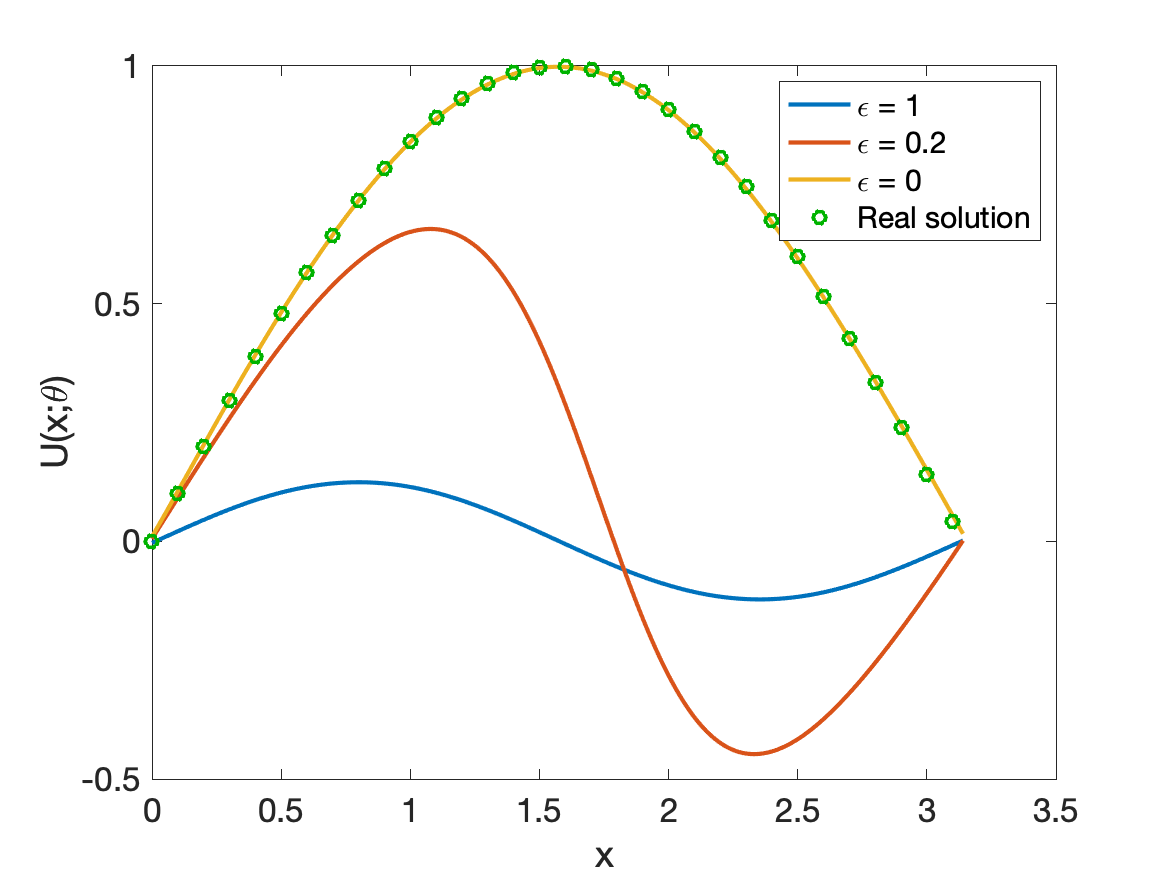}
			\caption{With a random initial guess} 
			\label{eg4_fig2}
		\end{subfigure}
		\caption{Numerical solutions of two tracking methods V.S. real solutions of system (\ref{example_3}). (Here we only plot solutions with $\epsilon=1, 0.2 \text{, and} 0$.)} 
	\end{figure}

	\begin{center}
	\begin{table}
	\begin{tabular}{|l|l|l|l|l|l|l|l|l|l|l|l|}
	\hline
	&\multicolumn{2}{|c|}{Homotopy tracking}&\multicolumn{2}{|c|}{With a random initial guess}\\
	\hline
	$\epsilon$&\# of iterations &Condition numbers&\# of iterations&Condition numbers\\
	\hline
	$1$&46&1.2e4&1867&1.4e3\\
	\hline
	$0.8$&11&1.2e4&73&8.6e3\\
	\hline
	$0.6$&5&2.3e4&1185&8.0e3\\
	\hline
	$0.4$&21&2.3e4&269&5.7e3\\
	\hline
	$0.2$&25&1.2e4&392&1.3e6\\
	\hline
	$0.1$&116&4.6e4&318&8.1e5\\
	\hline
	$0.05$&43&6.4e4&3471&8.1e4\\
	\hline
	$0.01$&285&3.1e8&371&2.0e5\\
	\hline
	$0$&2&1.0e10&57&5.0e4\\
	\hline
	\end{tabular}
	\caption{Numbers of iterations of the randomized Newton's method and condition numbers of (\ref{example_3}) when tracking $\epsilon$ from $1$ to $0$.}
	\label{table_3}
	\end{table}
	\end{center}

\subsection{2D Examples}

\subsubsection{An example with the analytical solution}
We consider the following 2D example on a rectangular domain $\Omega = [0,\pi]\times [0,\pi]$:
	\begin{equation}
	\label{example_4}
	\left\{
		     \begin{array}{lr}
		     \Delta u = -2u & \text{in } \Omega, \\
		     u(x,y) = \sin(x+y) & \text{on } \partial \Omega.
		     \end{array}
	\right.
	\end{equation}
Obviously, the real solution is
$u(x,y) = \sin(x+y).$ We use a one-hidden-layer neural network  with six nodes to
	approximate the solution, namely,
	\begin{equation}
		U((x,y)^T; \theta) = W_2\sigma(W_1(x,y)^T + b_1) + b_2,
	\end{equation}
	where $W_1\in \mathbb{R}^{6\times 2}$, $b_1\in \mathbb{R}^{6\times 1}$, $W_2\in \mathbb{R}^{1\times 6}$
	, $b_2\in \mathbb{R}$ and $\theta = \{W_1,b_1,W_2,b_2\}$.
The randomized Newton's method is used to solve Eq. (\ref{example_4}) with uniform sample points  on
	$[0, \pi]^2$ with step size 0.01. Figure \ref{eg3_fig1} plots the numerical solution versus the real solution, while Figure \ref{eg3_fig2} plots the numerical error which shows a good agreement with the real solution by using the randomized Newton's method.
	\begin{figure}
		\centering
		\begin{subfigure}{0.45\textwidth} 
			\includegraphics[width=\textwidth]{./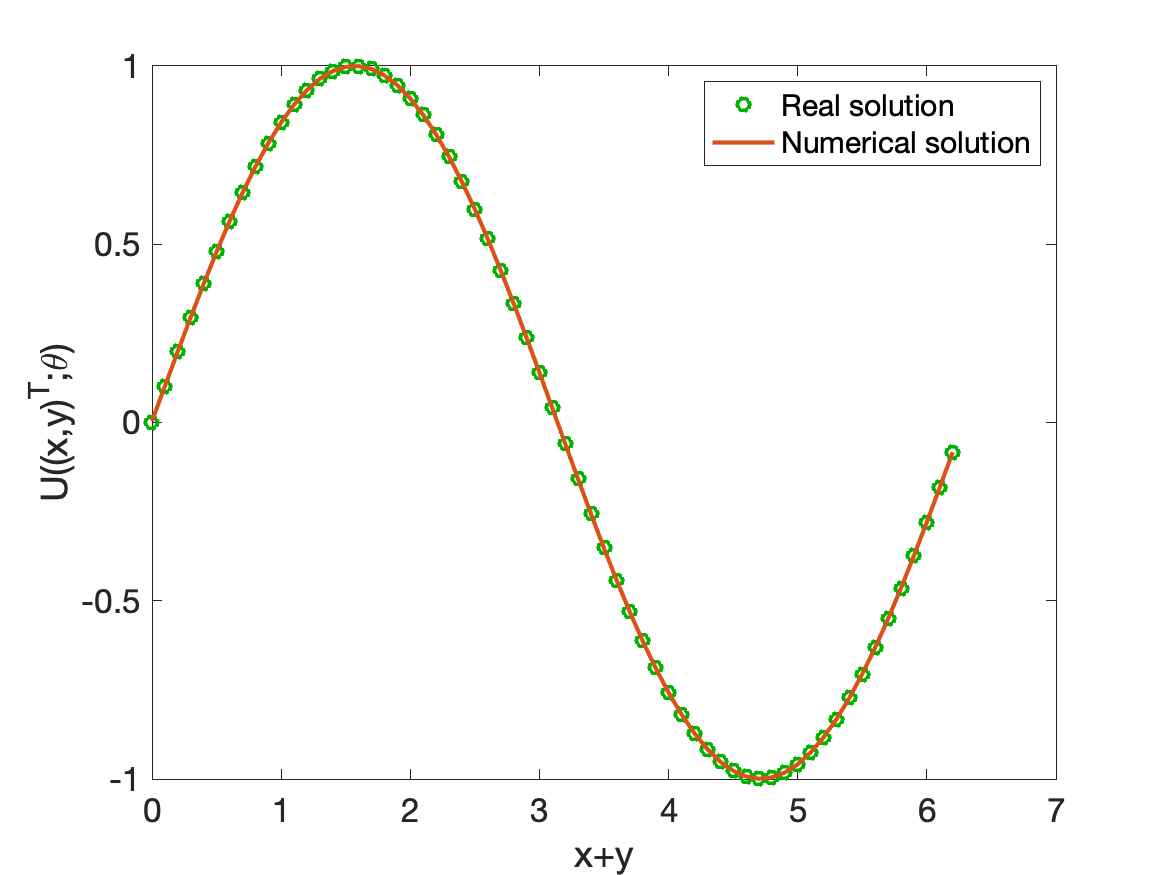}
			\caption{The numerical solution V.S. the real solution of Eq. (\ref{example_4}) in the $x+y$ direction.}
			\label{eg3_fig1}
		\end{subfigure}
		\vspace{1em} 
		\begin{subfigure}{0.45\textwidth} 
			\includegraphics[width=\textwidth]{./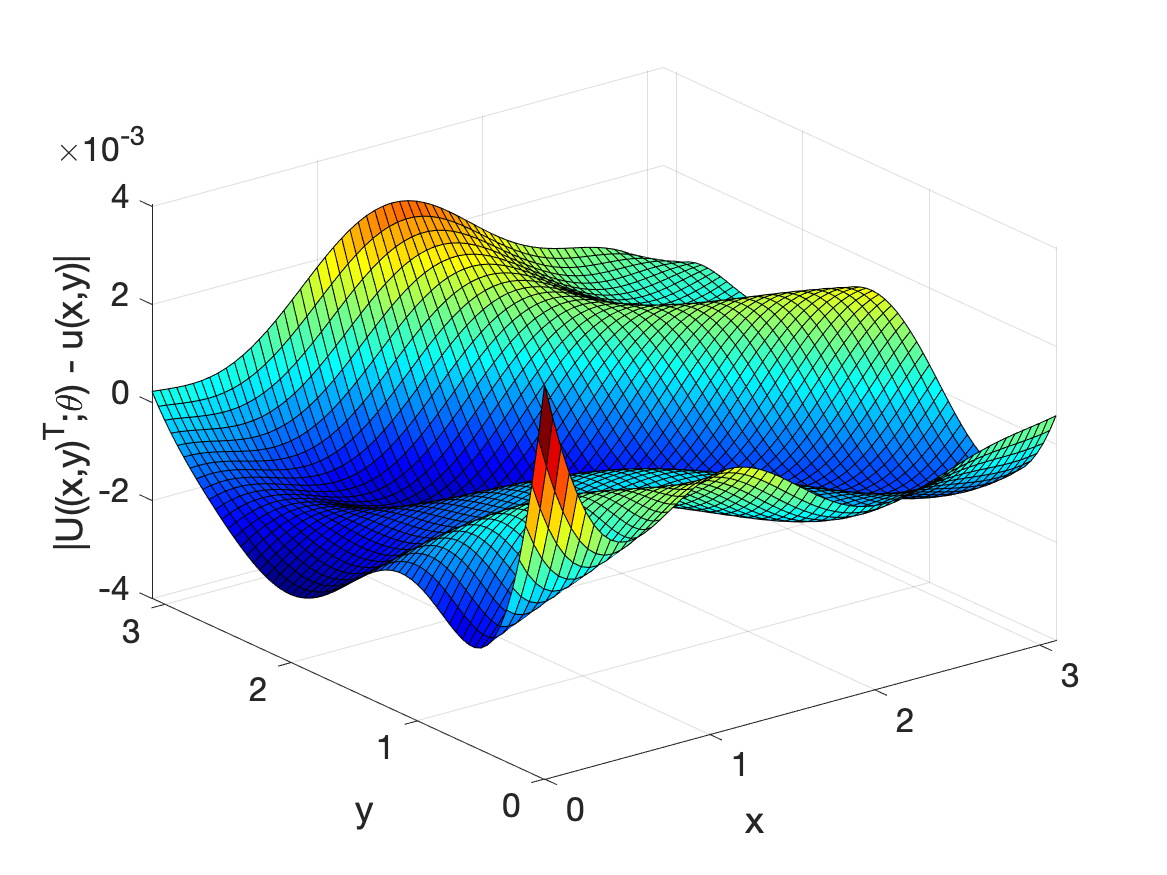}
			\caption{The numerical error of (\ref{example_4})} 
			\label{eg3_fig2}
		\end{subfigure}
		\caption{The numerical solution and error for solving Eq. (\ref{example_4}).} 
	\end{figure}

\subsubsection{The 2D Burger's equation}
	Next we consider the 2D Burger's equation with a viscosity term on $\Omega = \big[0,\frac{\pi}{\sqrt{2}}\big]\times\big[0,\frac{\pi}{\sqrt{2}}\big]$ \cite{chou2006high,chou2007high,hao2013homotopy}:
	\begin{equation}
	\label{example_5}
	\left\{
	     \begin{array}{lr}
	     (\frac{1}{\sqrt{2}}\frac{u^2}{2})_x + (\frac{1}{\sqrt{2}}\frac{u^2}{2})_y - \epsilon\Delta u = \sin(\frac{x+y}{\sqrt{2}})\cos(\frac{x+y}{\sqrt{2}}) & \text{in } \Omega, \\
	     u(x,y) = 0 & \text{on } \partial \Omega.
	     \end{array}
	\right.
	\end{equation}
Eq. (\ref{example_5}) recovers  the one-dimensional problem in Example 3 if we restrict the solution along the diagonal line. In order to approximate the solution, we use a one-hidden-layer neural network with three nodes with uniform sample points on $\big[0, \frac{\pi}{\sqrt{2}}\big]^2$ with step size 0.01.
Similar to Example 3, we use two tracking methods with respect to $\epsilon$ from $1$ to $0 $ coupled with the randomized Newton's method and list their numerical performance in	Table \ref{table_4} which shows that the randomized Newton's method converges faster with the homotopy tracking. The numerical solutions of two tracking methods are plotted in Figures \ref{eg5_fig1} and \ref{eg5_fig2} which show that the homotopy tracking yields the entropy solution while the random initial guess leads to an artificial steady state. In order to better compare with the real solution, here we plot solutions along
	the diagonal line of $\Omega$. The numerical errors are $1.8\times 10^{-2}$ for the solution with the homotopy tracking and $1.6\times 10^{-2}$ for the solution with a random initial guess.

	\begin{figure}
		\centering
		\begin{subfigure}{0.45\textwidth} 
			\includegraphics[width=\textwidth]{./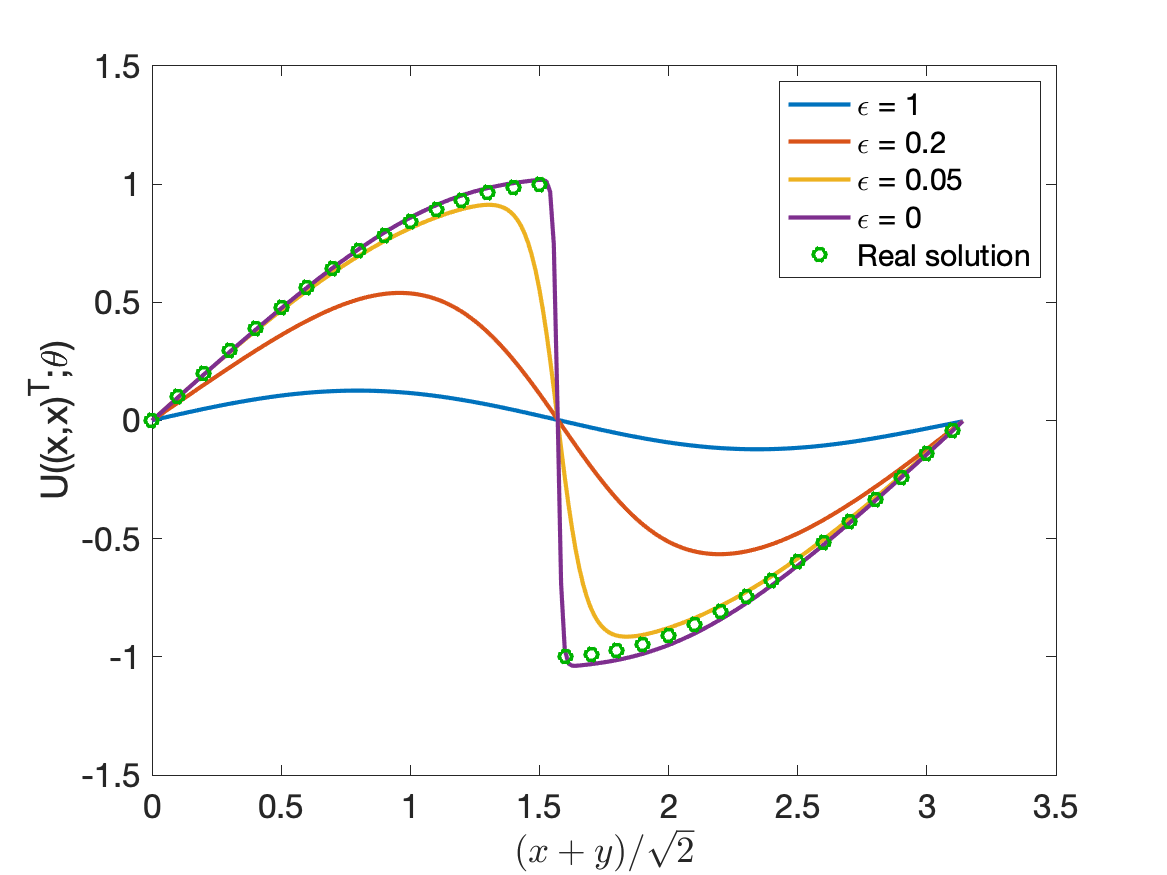}
			\caption{Homotopy tracking} 
			\label{eg5_fig1}
		\end{subfigure}
		\vspace{1em} 
		\begin{subfigure}{0.45\textwidth} 
			\includegraphics[width=\textwidth]{./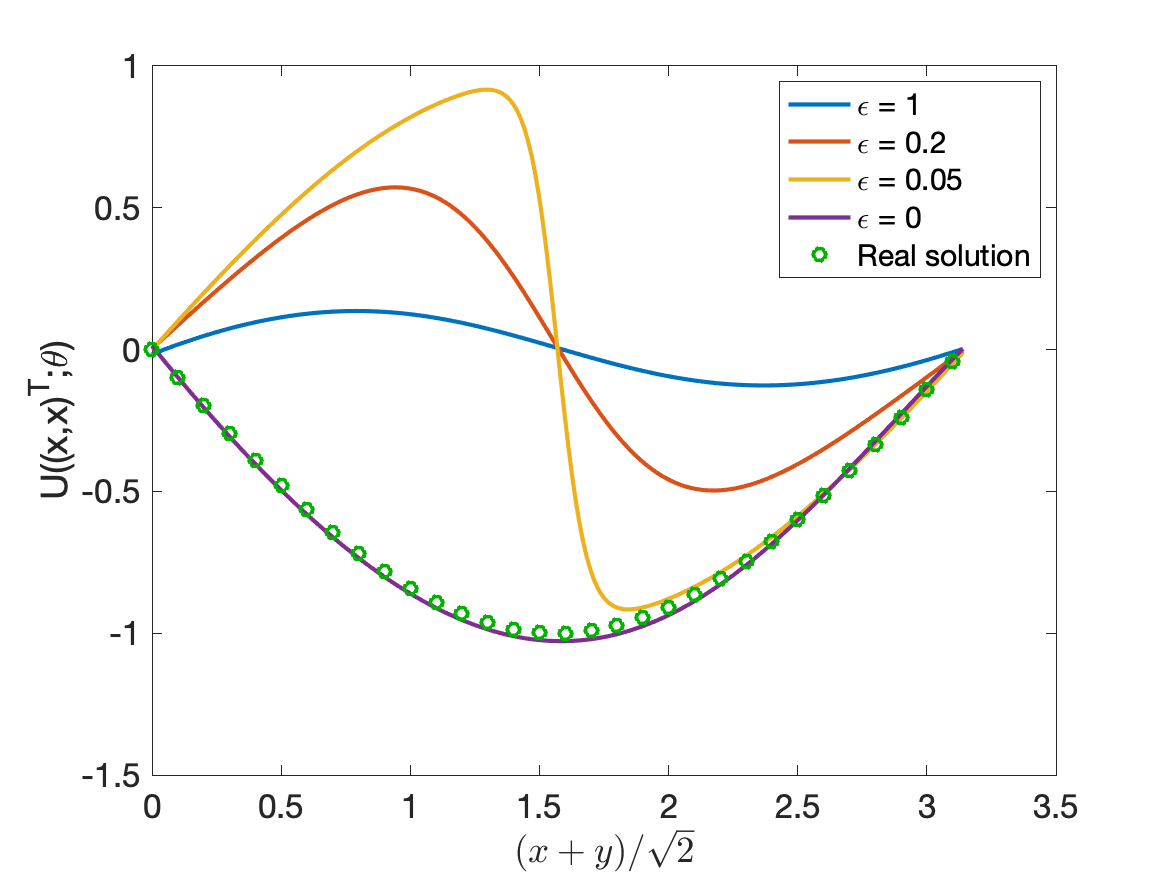}
			\caption{With a random initial guess} 
			\label{eg5_fig2}
		\end{subfigure}
		\caption{Numerical solutions of two tracking methods V.S. real solutions
of system (\ref{example_5}). (Here we only plot solutions with $\epsilon=1, 0.2, 0.05 \text{ and } 0$.)}
	\end{figure}

	\begin{center}
	\begin{table}
	\begin{tabular}{|l|l|l|l|l|l|l|l|l|l|l|}
	\hline
	&\multicolumn{2}{|c|}{Homotopy tracking}&\multicolumn{2}{|c|}{With a random initial guess}\\
	\hline
	$\epsilon$&$\#$ of iterations&Condition numbers&$\#$ of iterations&Condition numbers\\
	\hline
	$1$&75&3.4e2&149&5.0e4 \\
	\hline
	$0.8$&15&5.2e2&25&1.1e4 \\
	\hline
	$0.6$&8&5.2e2&119&1.6e3 \\
	\hline
	$0.4$&30&1.5e4&66&4.0e3 \\
	\hline
	$0.2$&22&5.2e3&32&3.8e4 \\
	\hline
	$0.05$&182&1.5e3&337&8.4e4 \\
	\hline
	$0.02$&233&1.1e5&494&2.5e4 \\
	\hline
	$0.01$&126&2.6e4&137&1.7e3 \\
	\hline
	$0$&928&1.7e6&176&3.2e4 \\
	\hline
	\end{tabular}
	\caption{Numbers of iterations of the randomized Newton's method and condition numbers of (\ref{example_5}) when tracking $\epsilon$ from $1$ to $0$.}
	\label{table_4}
	\end{table}
	\end{center}

\subsection{A high-dimensional example}
	We consider the following Laplace's  equation on the unit $n$-ball, namely, $\Omega=\{x\in  \mathbb {R}^n : \|x\|\leq 1\}$
	\begin{equation}
	\label{example_6}
	\left\{
	     \begin{array}{lr}
	     -\Delta u = \|x\| & \text{in } \Omega, \\
		     u(x) = 1 & \text{on } \partial \Omega. \\
	     \end{array}
	\right.
	\end{equation} For the radial symmetric case, the real solution has the following form
	\begin{equation}
		u(r;n) = \frac{-r^3+3n+4}{3n+3},
	\end{equation}
	where $0\le r\le 1$ and $n\ge 2$.

Numerically, we use one-hidden-layer neural networks with different nodes and $10^{2n}$
	uniform sample points on $\Omega$ to solve Eq. (\ref{example_6}) by employing the randomized Newton's method.
	Figure \ref{eg6_fig} shows a good agreement of numerical solutions with real solutions for $n=2$ and $n=3$. For higher dimensional case ($n>3$), we list numerical errors  different $n$ in Table \ref{table_5}, which shows that the randomized Newton's method can be used for solving high-dimensional differential equations.

	\begin{figure}[htbp]
        \includegraphics[width=.32\textwidth]{./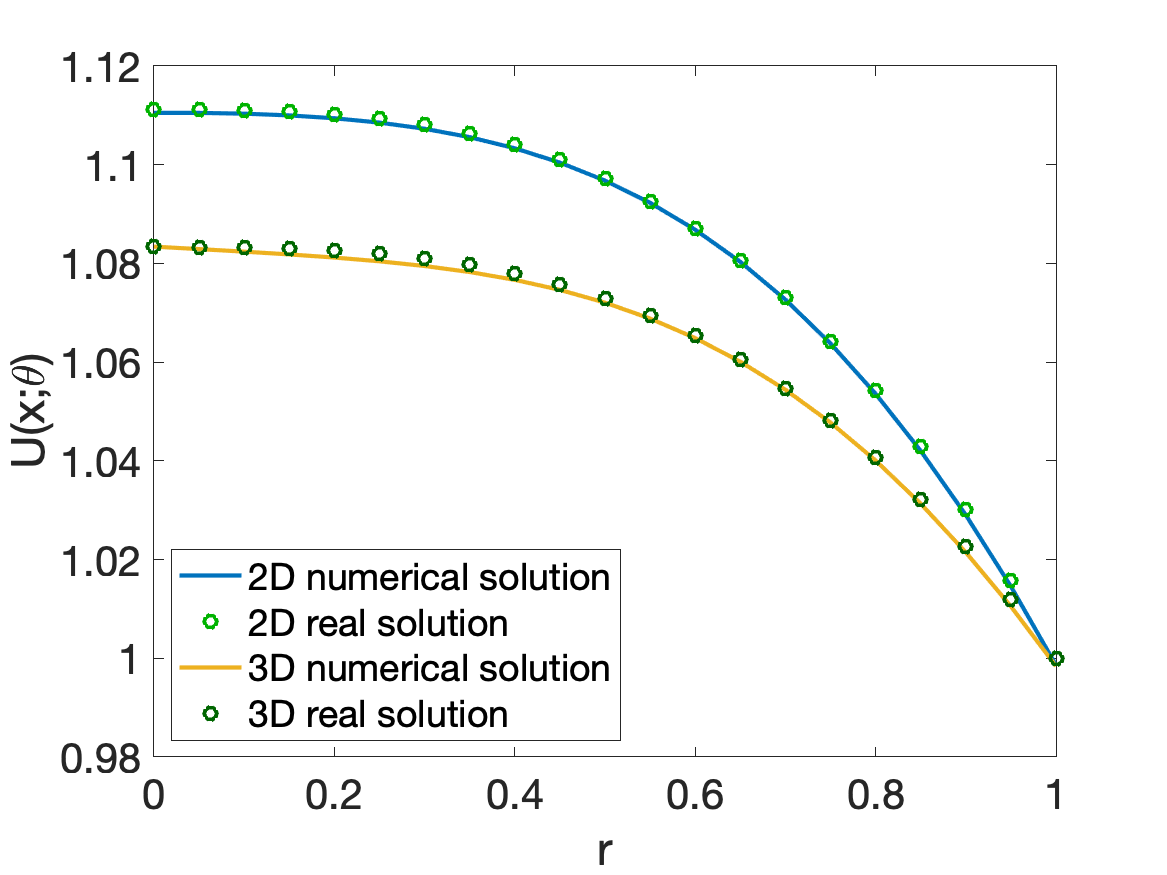}
        \includegraphics[width=.32\textwidth]{./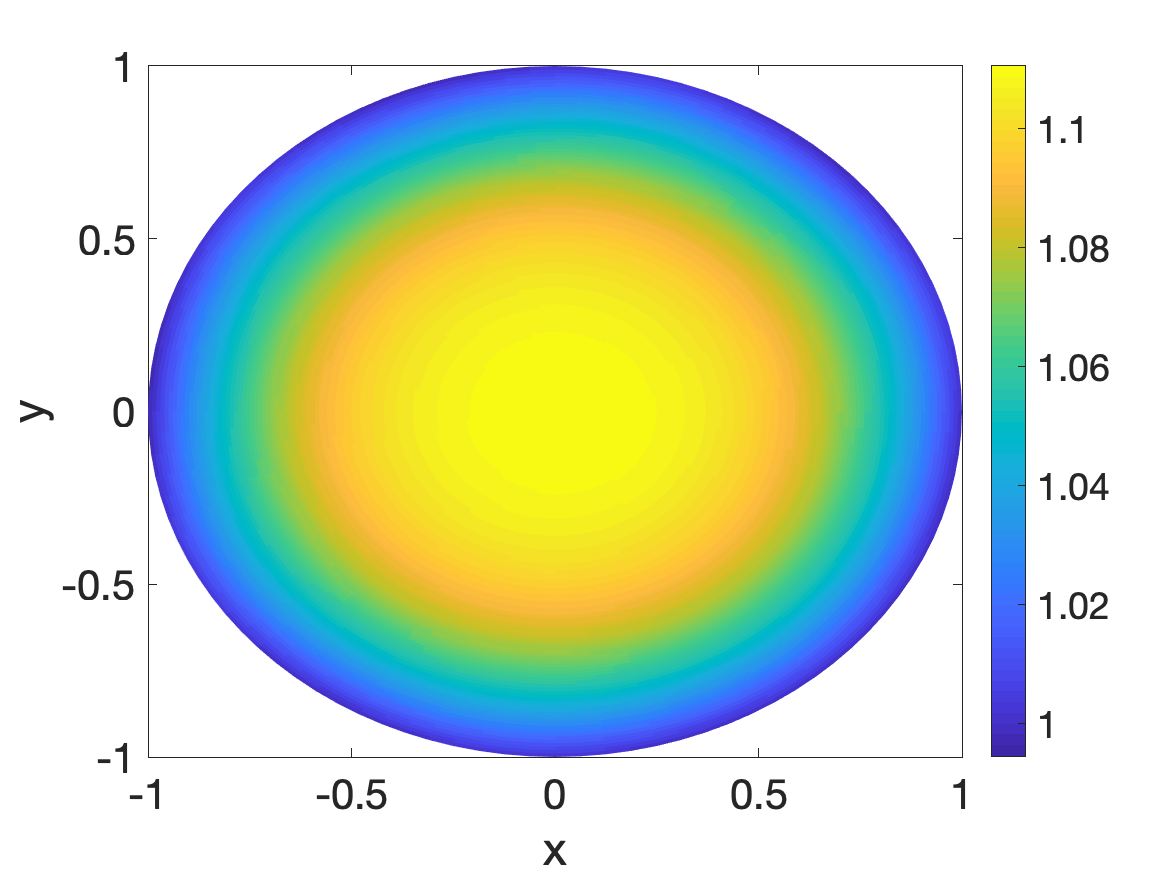}
         \includegraphics[width=.32\textwidth]{./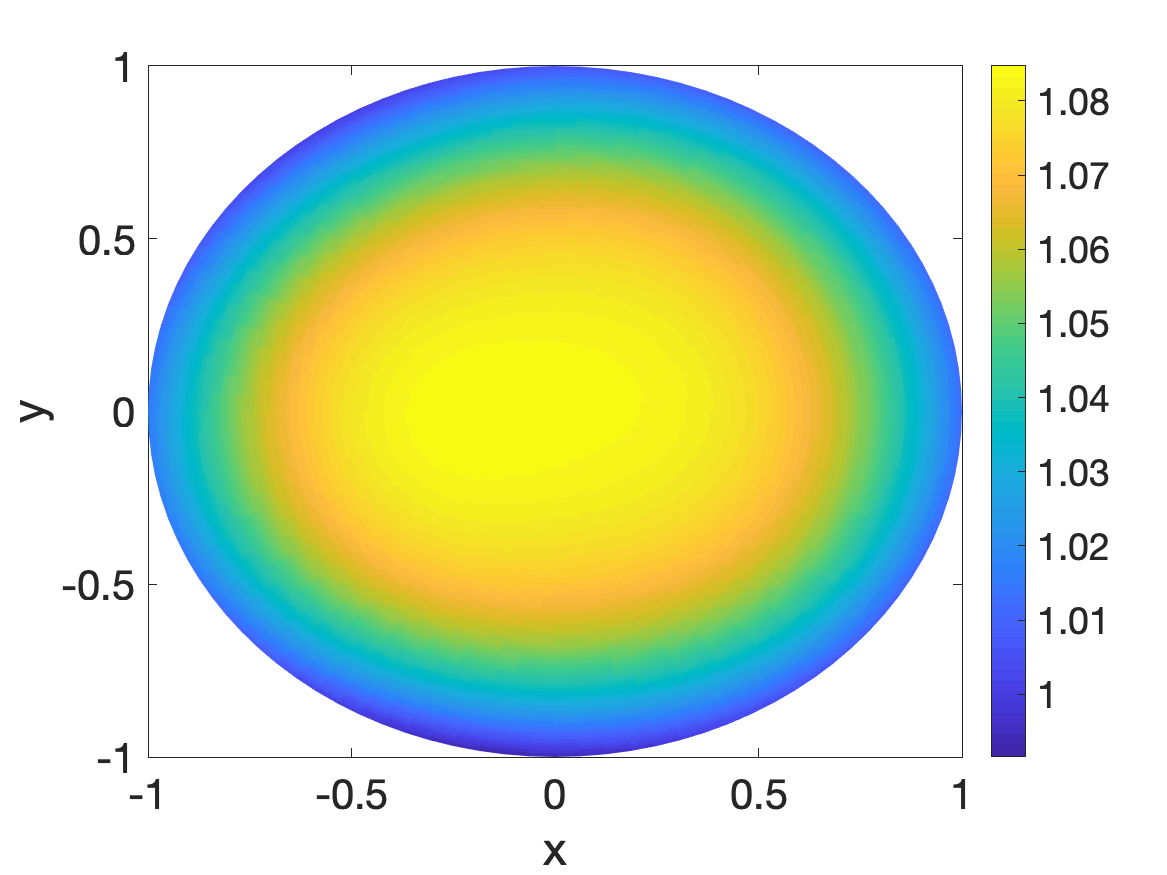}
        \caption{{\bf Left:} numerical solutions vs. real solutions of system (\ref{example_6}) in the radial direction; {\bf Middle:} the numerical solution for $n=2$; {\bf Right:} the numerical solution on the $x-y$ plane for $n=3$.}
       	\label{eg6_fig}
        \end{figure}

	\begin{table}
\centering
\begin{tabular}{|c|c|c|c|}
\hline
$n$&Width of hidden layers&$\#$ of variables& Errors\\
\hline
2&10&41&1.1e-3\\
\hline
3&35&176&2.1e-3\\
\hline
4&80&481&5.0e-3\\
\hline
5&100&701&4.2e-3\\
\hline
6&100&801&4.1e-3\\
\hline
\end{tabular}
\caption{Numerical errors of different $n$ for solving Eq. (\ref{example_6}).}
\label{table_5}
\end{table}

\subsection{An application to the pattern formation}
The pattern formation, as an important problem in physics and biology, involves nonlinear differential equations in various mathematical models. One of the key questions is to compute the nonuniform steady states of nonlinear differential equations, which are the so-called the stationary spatial patterns  \cite{kondo2010reaction,gierer1972theory,maini2012turing,turing1990chemical}.  However, numerical methods of solving these nonlinear systems, e.g, Newton's method,  are normally sensitive to the initial guesses that are hard to construct for the pattern formation models. In this paper, we will use the neural network discretization to ``learn" these nonuniform patterns. Although the accuracy of neural network discretization is low, it will provide an alternative way to compute the multiple solutions, which is hard for the traditional discretizations such as the finite difference or finite element methods. We use the Gray-Scott model \cite{pearson1993complex,lee1993pattern} to illustrate the idea:
	\begin{equation}
		\label{gray-scott}
		\begin{aligned}
			&\frac{\partial A}{\partial t} = D_A\Delta A + SA^2 - (\mu+\rho)A,\\
			&\frac{\partial S}{\partial t} = D_S\Delta S - SA^2 + \rho(1-S),
		\end{aligned}
	\end{equation}
	where $A$ is the concentration of an activator and $S$ is the
	concentration of a substrate. The growth of $A$ reacts with $S$ fed from the activator with a rate $\rho$, and $S$ is
converted to an inert product at the rate $\mu$. $D_A$ and $D_S$ are
the diffusion coefficients of $A$ and $S$, respectively. In our computations,  we choose $D_A = 2.5\times 10^{-4}$, $D_S = 5\times 10^{-4}$,
	$\rho = 0.04$ and $\mu = 0.065$.

First, we consider the 1D case with $[0,1]$ as the domain and use a one-hidden-layer neural network with ten nodes to discretize $A$ and $S$, namely,
	\begin{equation}
		\label{2d_network}
		\begin{aligned}
		A(x) &= W_2 \sigma(W_1 x + b_1) + b_2 \hbox{,~} 		S(x) &= W_4 \sigma(W_3 x + b_3) + b_4.
		\end{aligned}
	\end{equation}
Based on the following four initial guesses,  we obtained different steady patterns by using the randomized Newton's method and show in Figure \ref{eg8}:
	\begin{equation}
	\begin{aligned}
	&\text{ IG1: } (A,S) =\left(\frac{3}{10}\cos(3\pi x)+\frac{1}{2},-\frac{3}{10}\cos(3\pi x)+\frac{1}{2}\right),\\
	&\text{ IG2: } (A,S) =\left(-\frac{3}{10}\cos(3\pi x)+\frac{1}{2},\frac{3}{10}\cos(3\pi x)+\frac{1}{2}\right),\\
	 &\text{ IG3: } (A,S) =\left(\frac{3}{10}\cos(\pi x)+\frac{1}{2},-\frac{3}{10}\cos(\pi x)+\frac{1}{2}\right),\\
	 &\text{ IG4: } (A,S) =\left(-\frac{3}{10}\cos(\pi x)+\frac{1}{2},\frac{3}{10}\cos(\pi x)+\frac{1}{2}\right).
	 \end{aligned}\label{IG}
	 \end{equation}
	\begin{figure}
		\centering
		\begin{subfigure}{0.45\textwidth} 
			\includegraphics[width=\textwidth]{./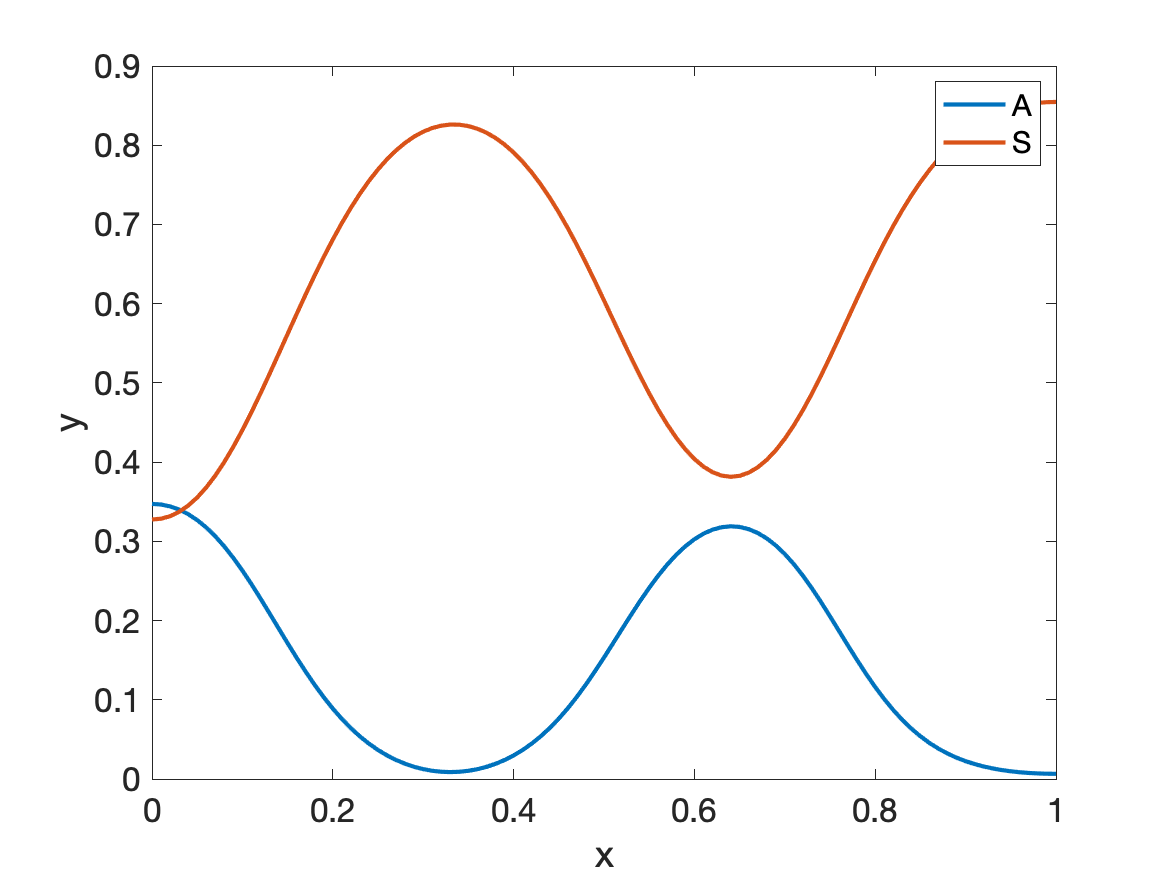}
			\caption{Steady patterns by IG1} 
			\label{eg8_1}
		\end{subfigure}
		\vspace{1em} 
		\begin{subfigure}{0.45\textwidth} 
			\includegraphics[width=\textwidth]{./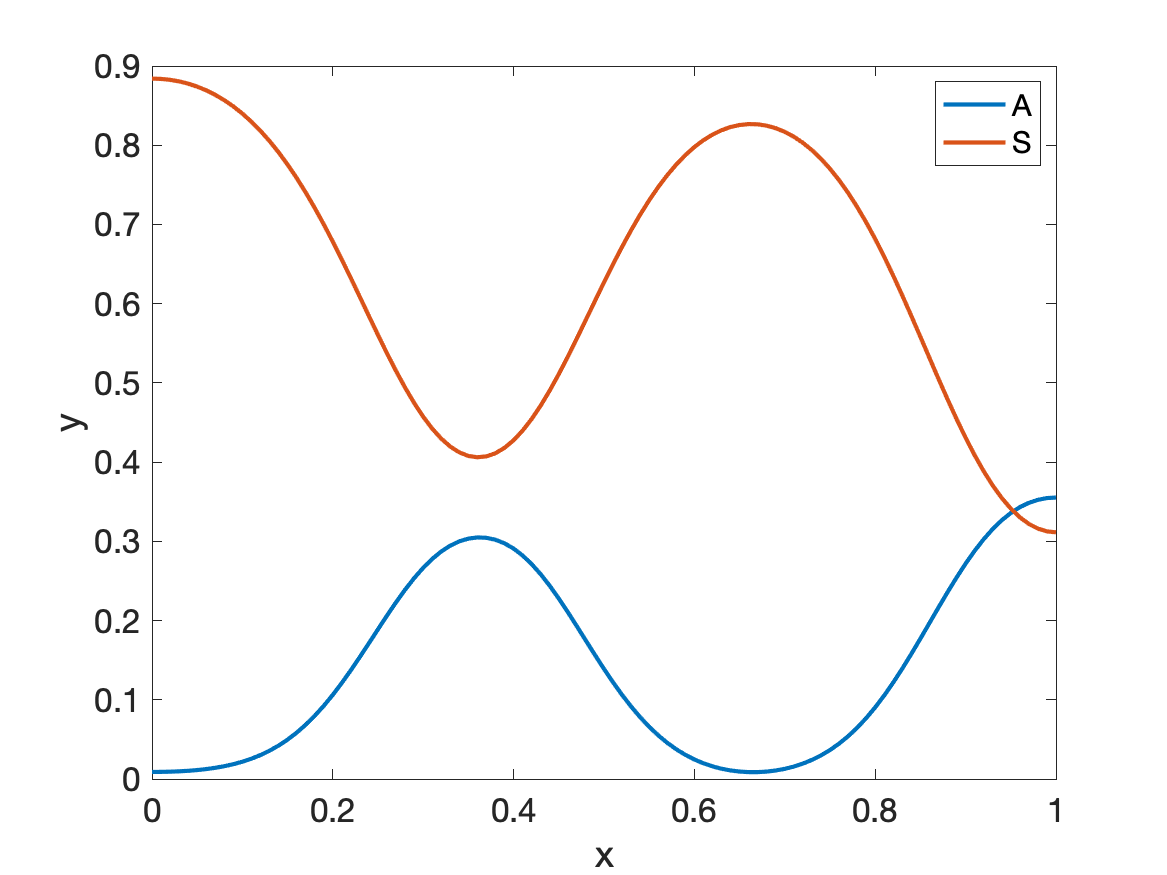}
			\caption{Steady patterns by  IG2} 
			\label{eg8_2}
		\end{subfigure}
		\vspace{1em} 
		\begin{subfigure}{0.45\textwidth} 
			\includegraphics[width=\textwidth]{./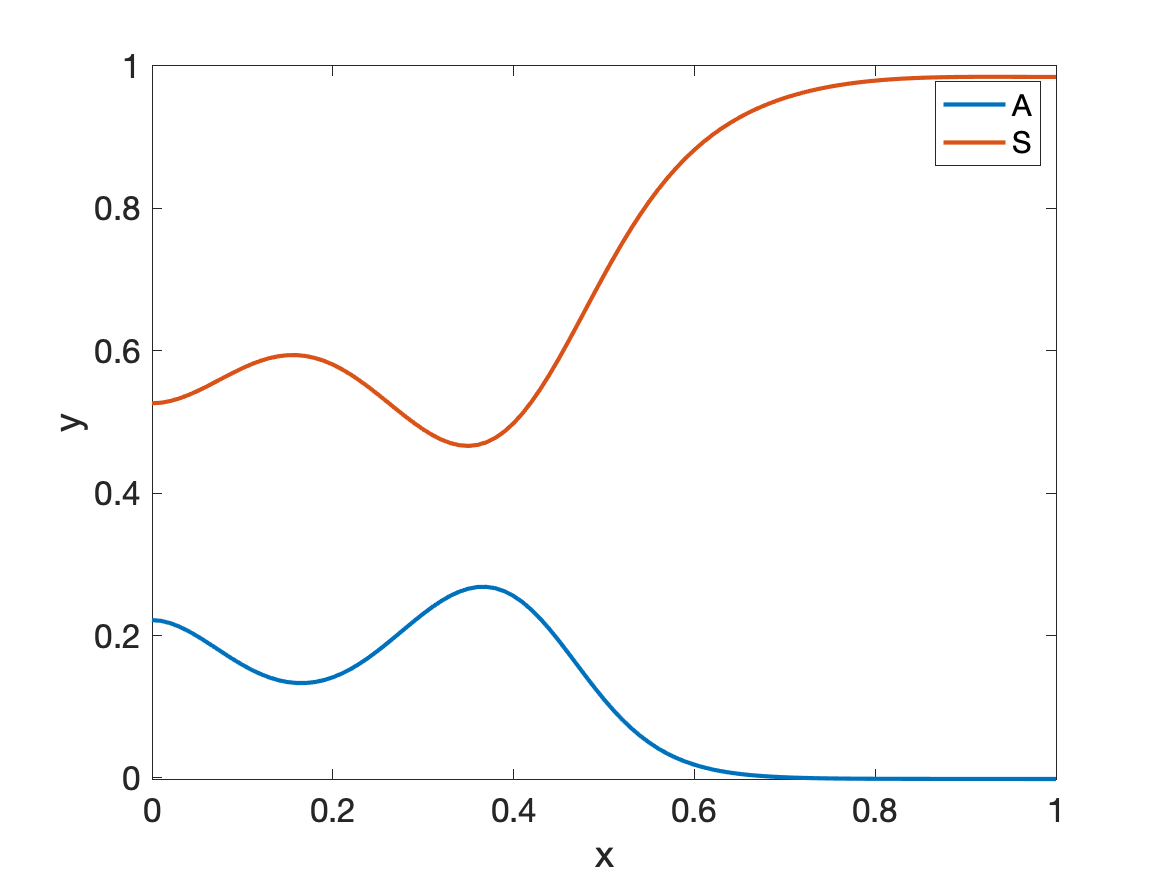}
			\caption{Steady patterns by  IG3} 
			\label{eg8_3}
		\end{subfigure}
		\vspace{1em} 
		\begin{subfigure}{0.45\textwidth} 
			\includegraphics[width=\textwidth]{./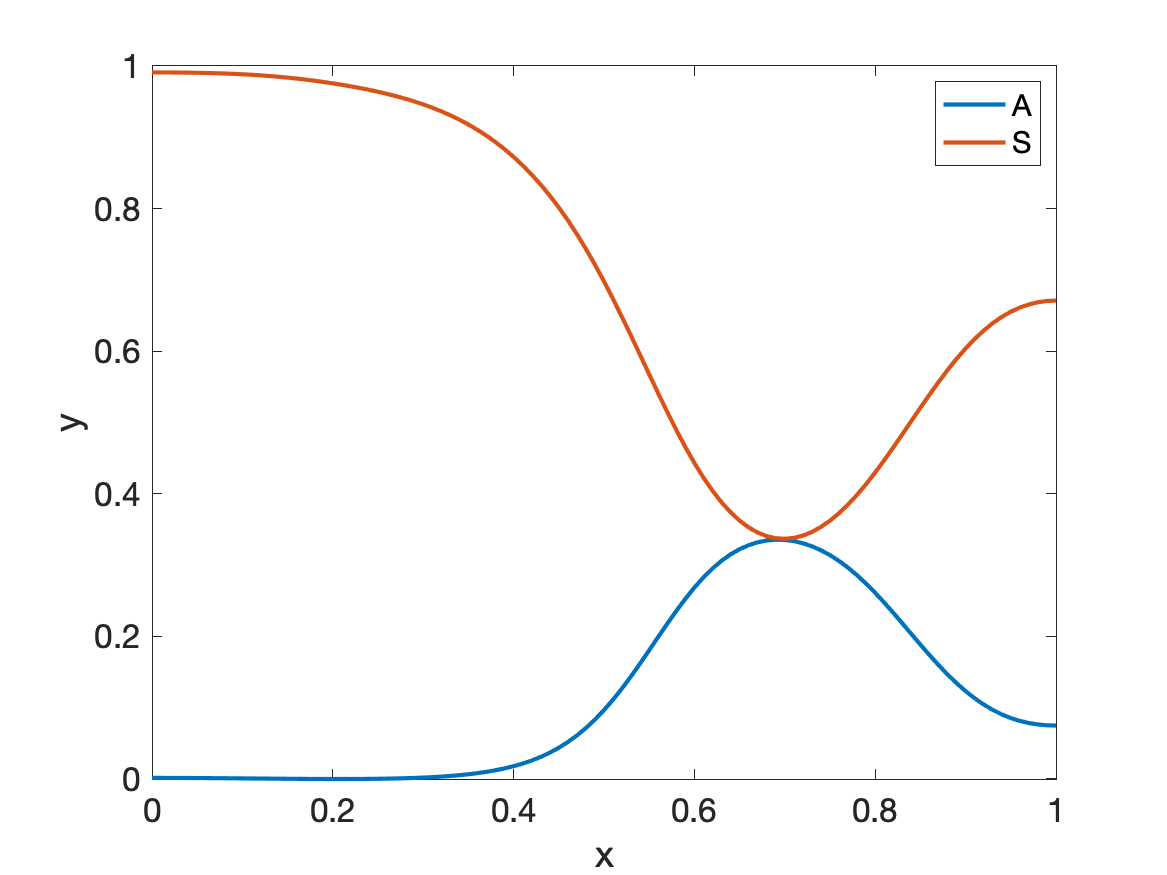}
			\caption{Steady patterns by  IG4} 
			\label{eg8_4}
		\end{subfigure}
		\caption{Steady patterns of the 1D Gray-Scott model by using differential initial guesses shown in Eq. (\ref{IG}).}\label{eg8} 
	\end{figure}
	Secondly, we consider the 2D case with the domain as $[0,1]^2$  and also use a one-hidden layer neural network with ten nodes as the discretization, namely,
	\begin{equation}
		\label{2d_network}
		\begin{aligned}
		A(x,y) &= W_2 \sigma(W_1 (x,y)^T + b_1) + b_2	\hbox{~and~}	S(x,y) &= W_4 \sigma(W_3 (x,y)^T + b_3) + b_4.
		\end{aligned}
	\end{equation}
In the 2D case, we run the randomized Newton's method many times with the same initial guess in order to ``learn" the multiple steady patterns. For instance, the initial guess shown in
 Figure \ref{gray_scott_1} (left) yields two stable patterns shown in  Figure \ref{gray_scott_1} (right); Figure  \ref{gray_scott_2} shows four steady patterns can be ``learned" from one initial guess. Thus the randomized Newton's method can be used to compute the multiple solutions of nonlinear differential equations.

	\begin{figure}[htbp]
        \centering
        \begin{minipage}[t]{0.5\textwidth}
        \centering
        \vspace{-7em}
        \includegraphics[width=\textwidth]{./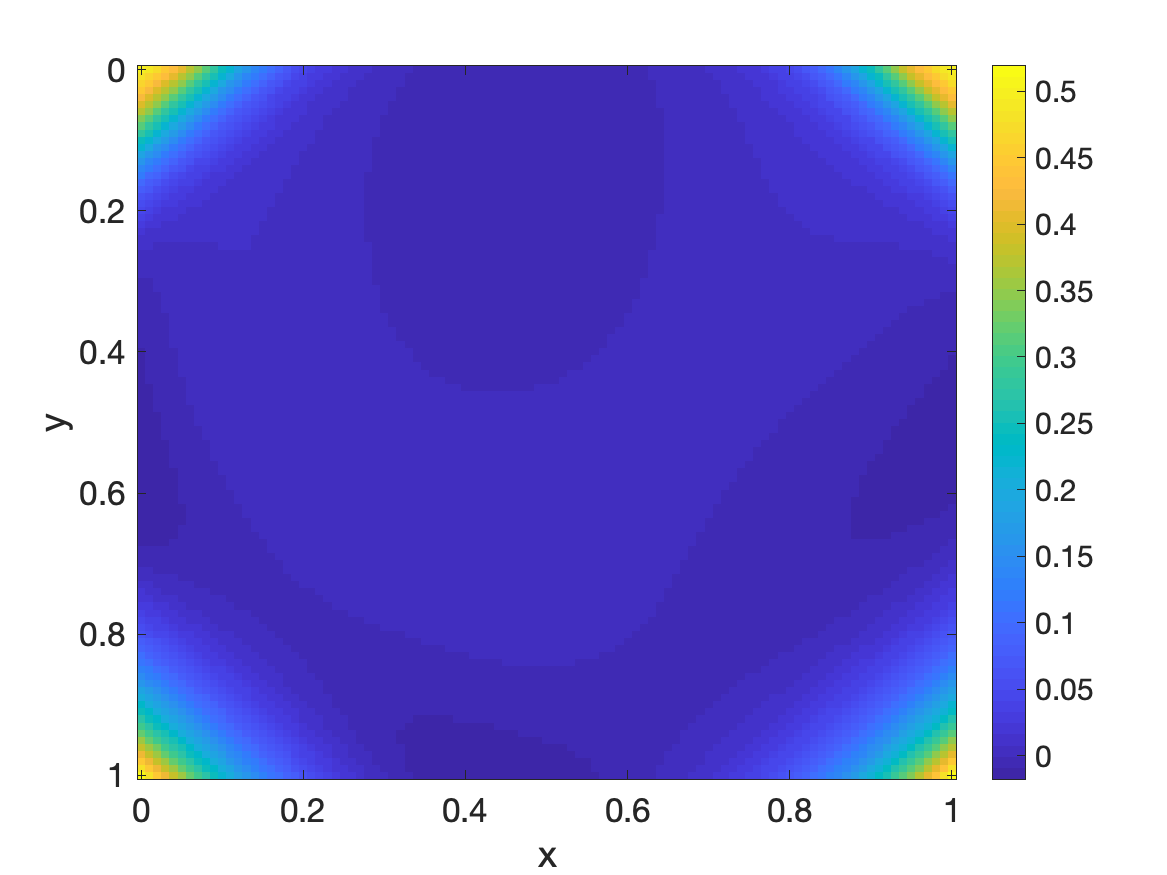}
        \end{minipage}
        \begin{minipage}[t]{0.4\textwidth}
        \centering
        \includegraphics[width=\textwidth]{./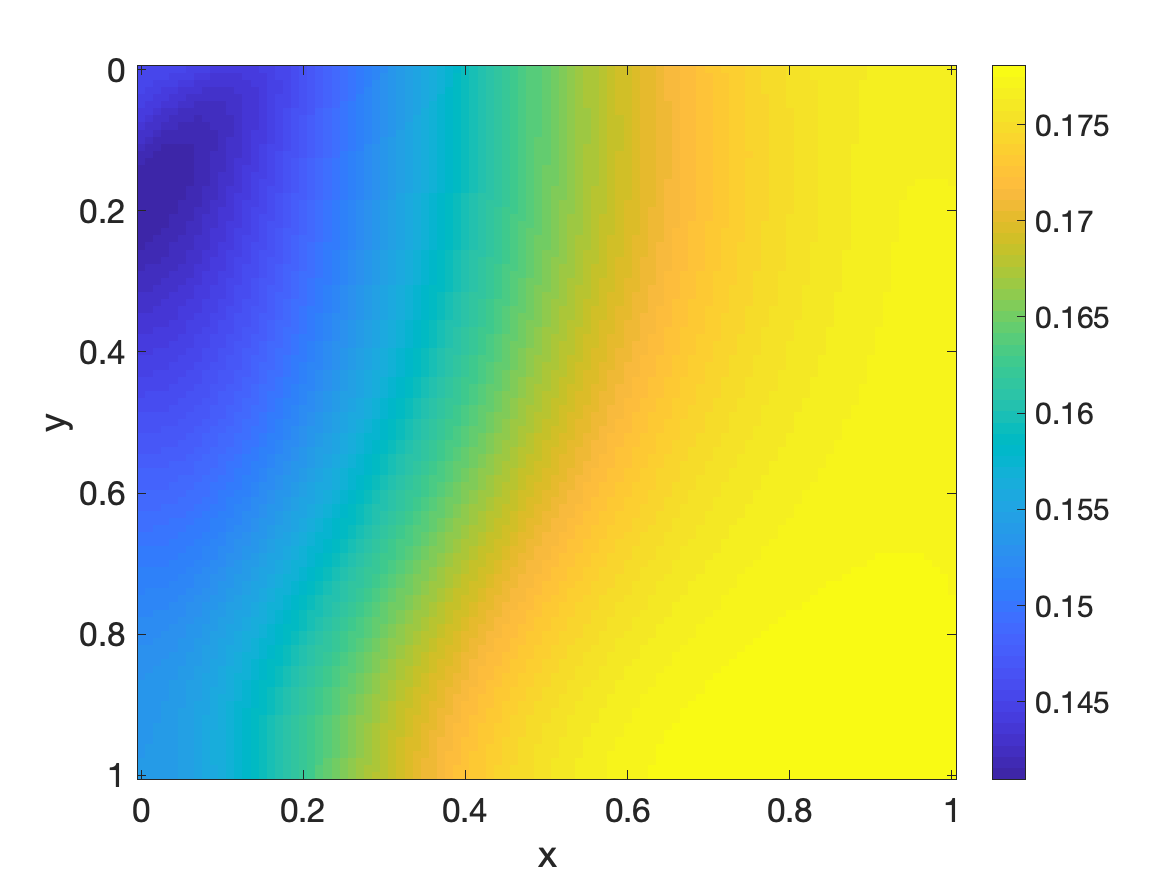}\\
         \includegraphics[width=\textwidth]{./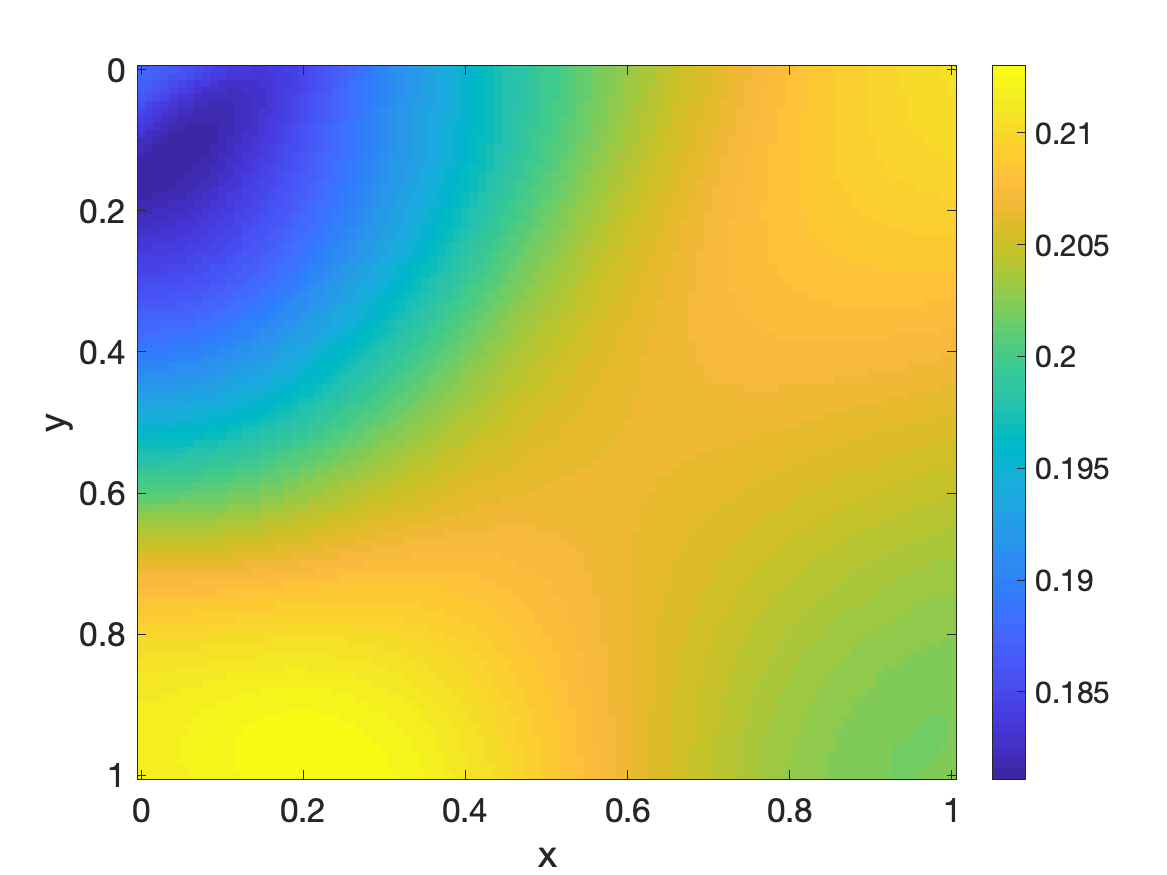}
        \end{minipage}
        \caption{{\bf Left:} The initial guess of $A(x,y)$; {\bf Right:} steady patterns of $A(x,y)$ for the 2D Gray-Scott model.}
        \label{gray_scott_1}
        \end{figure}

        	\begin{figure}[htbp]
        \centering
        \begin{minipage}[t]{0.4\textwidth}
        \centering
       \vspace{-6em}
        \includegraphics[width=\textwidth]{./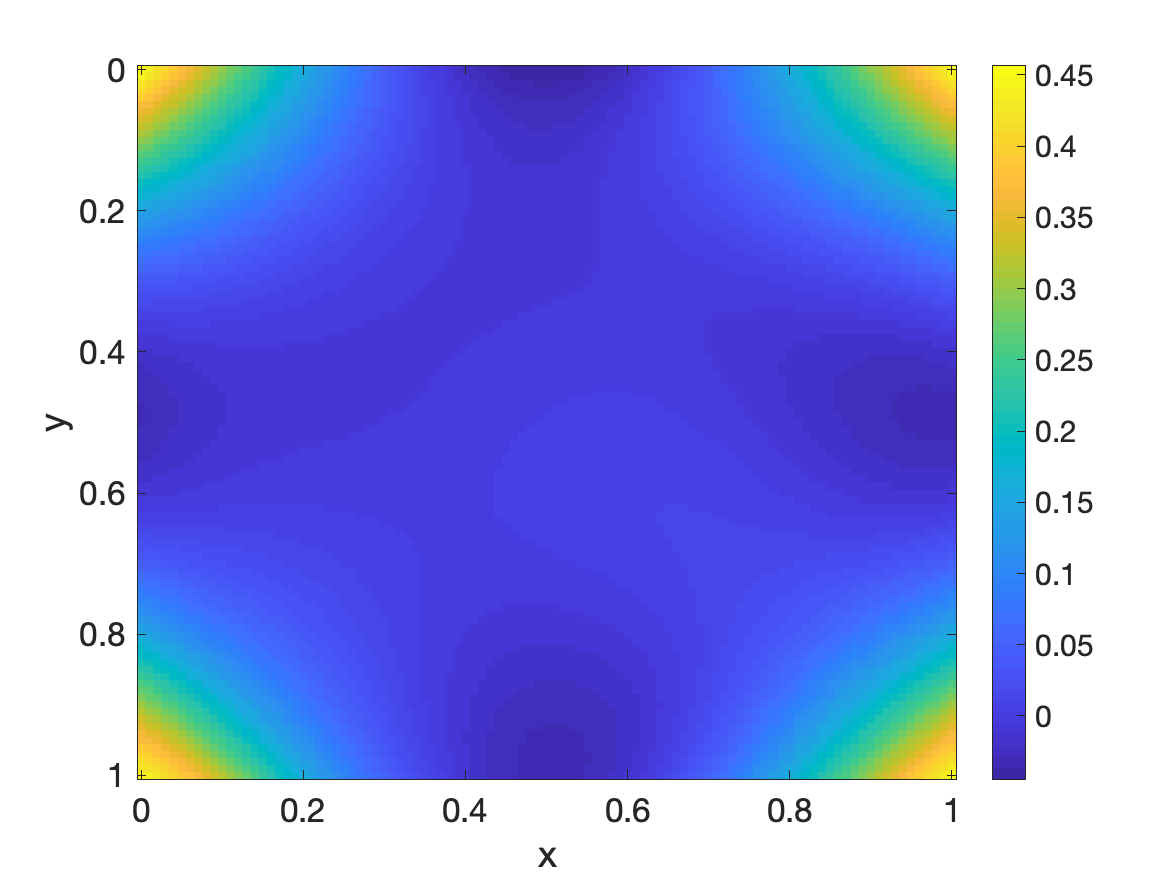}
        \end{minipage}
        \begin{minipage}[t]{0.25\textwidth}
        \centering
        \includegraphics[width=\textwidth]{./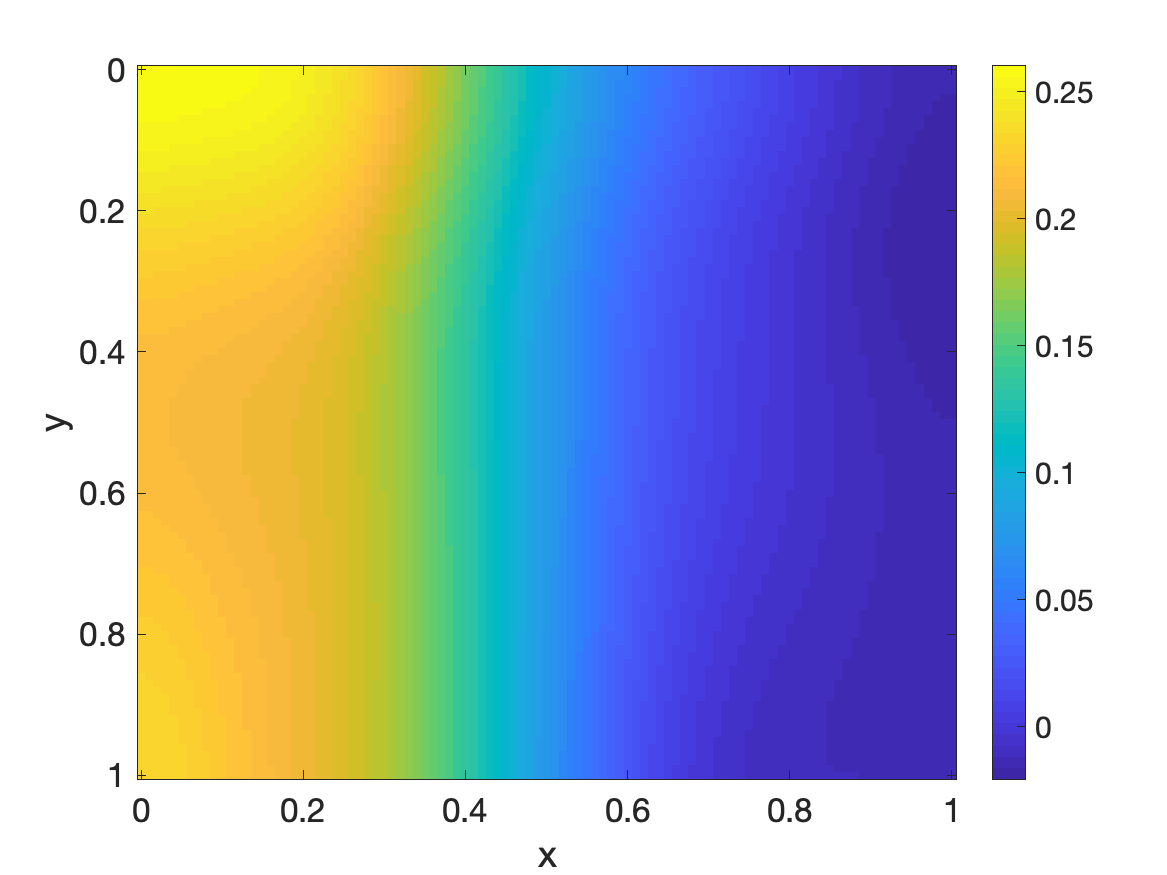}
        \includegraphics[width=\textwidth]{./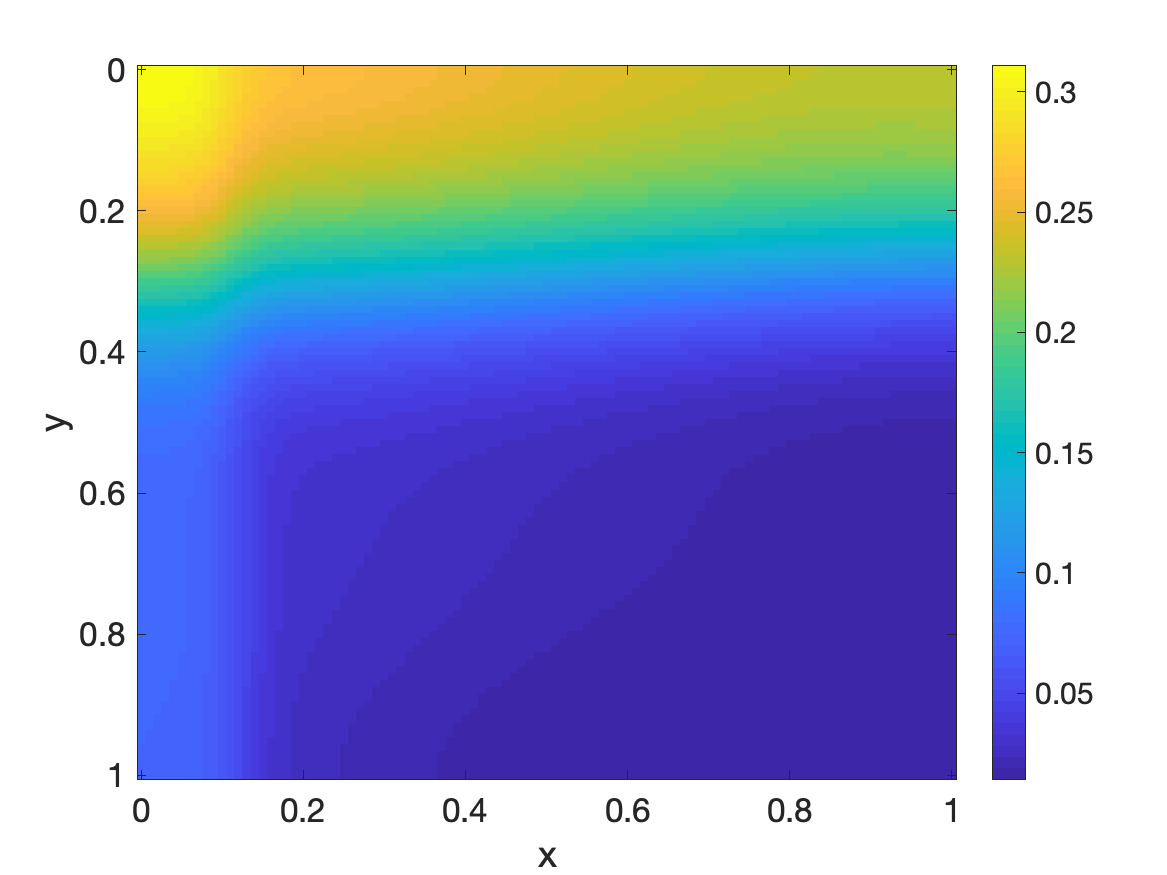}
        \end{minipage}
        \begin{minipage}[t]{0.25\textwidth}
        \centering
        \includegraphics[width=\textwidth]{./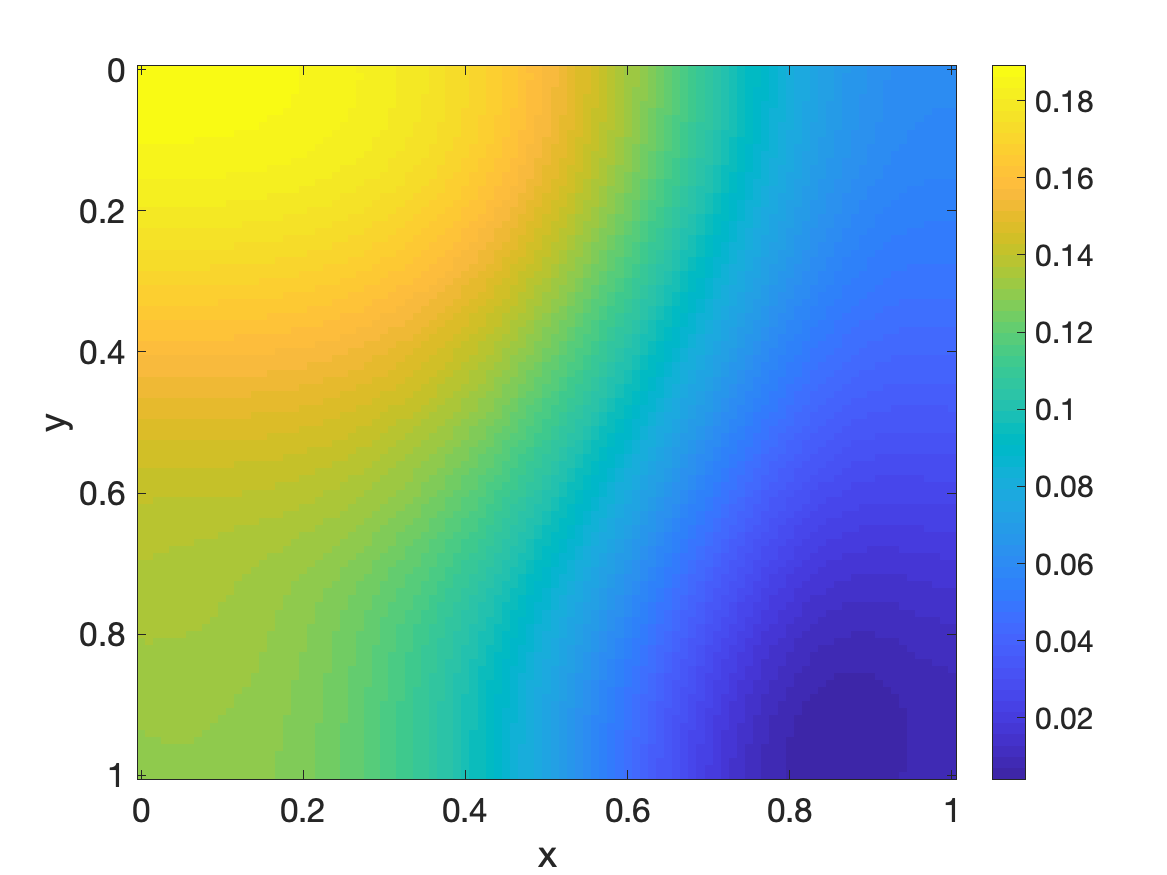}
        \includegraphics[width=\textwidth]{./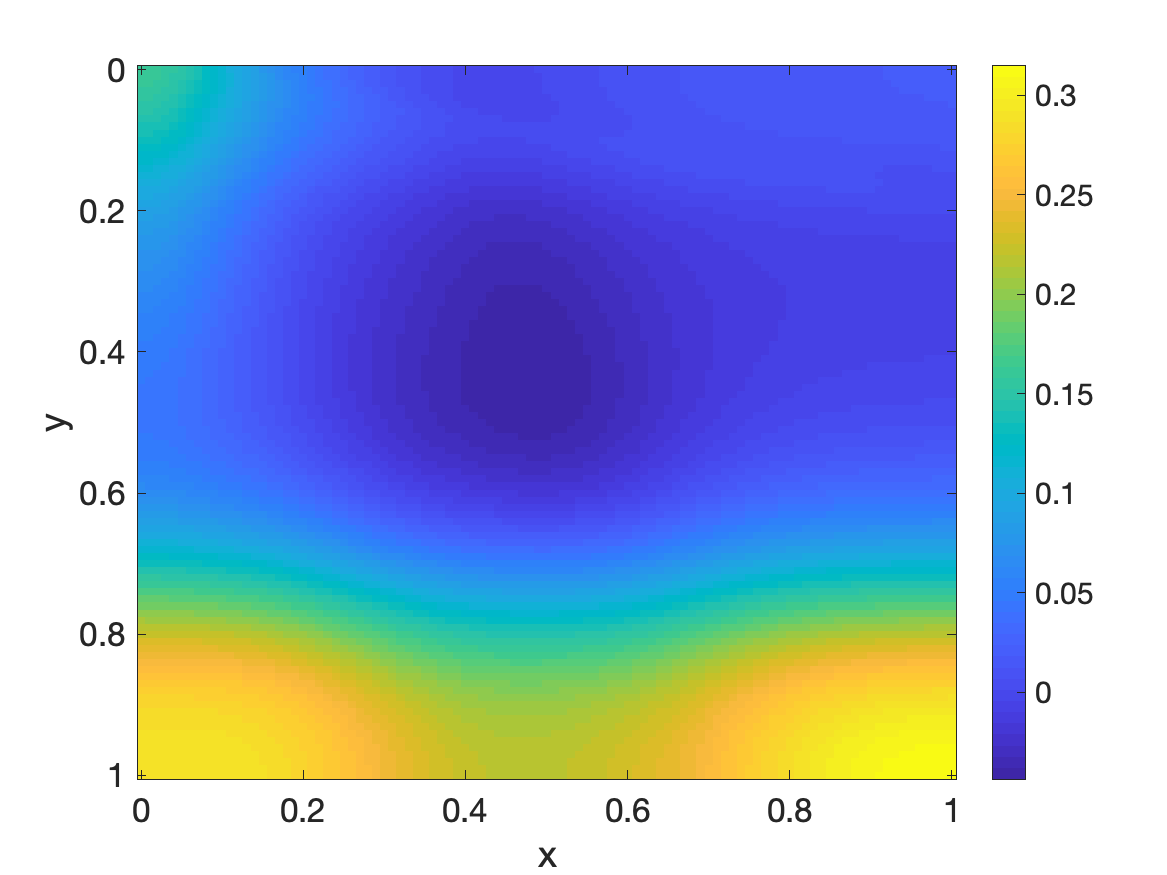}
        \end{minipage}
        \caption{{\bf Left:} The initial guess of $A(x,y)$; {\bf Right:} steady patterns of $A(x,y)$ for the 2D Gray-Scott model.}
        \label{gray_scott_2}
        \end{figure}

\section{Conclusion}
In this paper, we develop a randomized Newton's method for solving differential equations based on the fully connected neural network discretization. This proposed method is designed specifically to solve an overdetermined nonlinear system, since the number of sample points in such a system is much larger than the number of variables. For each iteration, we randomly choose equations from the nonlinear system
and apply the classical Newton's method repeatedly, and we prove theoretically that the randomized Newton's method has a local quadratic convergence. Using several examples, we also demonstrate, numerically, that the randomized Newton's method for solving both linear and nonlinear equations is indeed efficient and feasible. Moreover, the method developed here can be used to solve high-dimensional differential equations that are otherwise hard to solve by traditional numerical methods. Another advantage of this method is that it allows for computing the multiple solutions of nonlinear differential equations, such as pattern formation problems. In future work, we will apply the other types of neural networks to discretize differential equations (e.g., conventional neural networks) and aim to reduce the redundancy of neural network discretization, in order to improve the convergence of the randomized Newton's method.

\bibliographystyle{plain}
\bibliography{references}

\end{document}